\newtheorem{define}{Definition}
\newtheorem{propo}{ Proposition}
\newtheorem{coro}{Corollary}
\newtheorem{theorem}{Theorem}
\begin{document}

\title [Paley-Wiener Theorem for  the Weinstein Transform ]{Paley-Wiener Theorem for  the Weinstein Transform and applications\\}%

\author[ Khaled Mehrez]{KHALED MEHREZ }
\address{Khaled Mehrez. D\'epartement de Math\'ematiques ISSAT Kasserine, Universit\'e de Kairouan, Tunisia.}
 \email{k.mehrez@yahoo.fr}
\begin{abstract} In this paper our aim is to establish the Paley-Wiener Theorems for  the Weinstein Transform. Furthermore, some applications are presents, in particular some properties for the generalized translation operator associated with the  Weinstein operator are proved.
\end{abstract}
\maketitle
\noindent\textbf{keywords:} Weinstein transform, Paley-Wiener Theorem.\\
\\
\noindent\textbf{Mathematics Subject Classification  (2010):} 42B10, 44A15, 44A20, 30G35.
\section{Introduction}

 Paley-Wiener theorem is any theorem that relates decay properties of a function or distribution at infinity with analyticity of its Fourier transform \cite{PW}.  For example, the classical Paley-Wiener theorem (see \cite{++}) stated that  states  that  a  necessary  and  sufficient
condition  for  a  square-integrable  function $\varphi$ to  be  extendable to an entire  function  in the  complex  plane  with  an  exponential  type  bound 
$$\left|\varphi(z)\right|\leq C e^{A z}$$
if $\varphi$ is band-limited, i.e. the Fourier transform $\hat{\varphi}$ has compact support.  Higher dimensional extensions of the Paley-Wiener theorem have been studied. There are plenty of Paley-Wiener type theorems since there are many kinds of bound for
decay rates of functions and many types of characterizations of smoothness. In this regard a wide number of papers have been devoted to the extension of the theory on many other transforms and different classes of functions, for example, the Mellin transform \cite{M}, Hankel transform \cite{Kr, V}, Jacobi transform \cite{V1} and Clifford-Fourier transform \cite{SV,VL} .

Since  the  Weinstein transform  are  natural generalizations of  the  Fourier transforms,  it is natural  to ask  whether such  a  representation  for  entire functions  is possible in  this case also.  The  aim  of  this paper  is  to  obtain  an  analogue  of  the  Paley-Wiener  theorem  for Weinstein  transforms. As applications  of this results, some properties for the generalized translation operator associated with the  Weinstein operator are  established.
\\
\section{Harmonic analysis Associated with the Weinstein Operator}

In order to set up basic and standard notation we briefly overview the Weinstein
operator and related harmonic analysis. Main references are \cite{B1, B2}.

In the following we denote by 

\begin{itemize}
\item $\mathbb{R}^{d+1}_+=\mathbb{R}^d\times (0,\infty)$.
\item $x=\left(x_1,...,x_d,x_{d+1}\right)=(x^\prime,x_{d+1}).$
\item $-x=(-x^\prime,x_{d+1}).$
\item $C_*(\mathbb{R}^{d+1})$, the space of continuous functions on $\mathbb{R}^{d+1},$ even with respect to the last variable.
\item $S_*(\mathbb{R}^{d+1})$, the space of the $C^\infty$ functions, even with respect to the last variable, and rapidly decreasing together with their derivatives.
	\item $L^p_\alpha(\mathbb{R}^{d+1}_+),\;1\leq p\leq \infty,$ the space of measurable functions $f$ on $\mathbb{R}^{d+1}_+$ such that 
	$$\left\|f\right\|_{\alpha,p}=\left(\int_{\mathbb{R}^{d+1}_+}\left|f(x)\right|^pd\mu_\alpha(x)\right)^{1/p}<\infty, \;p\in[1,\infty),$$
	$$\left\|f\right\|_{\alpha,\infty}=\textrm{ess}\sup_{x\in\mathbb{R}^{d+1}_+}\left|f(x)\right|<\infty,$$
	where 
	\begin{equation}\label{mesure}
	d\mu_\alpha(x)=x^{2\alpha+1}_{d+1}dx=x^{2\alpha+1}_{d+1}dx_1...dx_{d+1}.
	\end{equation}
	\item $\mathcal{P}_{*,l}^{d+1}$ the set of homogeneous polynomials on $\mathbb{R}^{d+1}$ of degree $l$, even with respect to the last variable. 
	\item $\mathcal{A}_\alpha(\mathbb{R}^{d+1})=\left\{\varphi\in L^1_\alpha(\mathbb{R}^{d+1}_+);\;\mathcal{F}_{W,\alpha}\varphi\in L^1_\alpha(\mathbb{R}^{d+1}_+)\right\}$ the Wiener algebra space. 
	\item $S^d_+=\left\{x\in\mathbb{R}^{d+1}_+;\;\left\|x\right\|=1\right\}.$
\end{itemize}

We consider  the Weinstein operator $\Delta_{W,\alpha}^d$ defined on $\mathbb{R}^{d+1}_+$ by
\begin{equation}
\Delta_{W,\alpha}^d=\sum_{j=1}^{d+1}\frac{\partial^2}{\partial x_j^2}+\frac{2\alpha+1}{x_{d+1}}\frac{\partial}{\partial x_{d+1}}=\Delta_d+L_\alpha,\;\alpha>-1/2,
\end{equation}
where $\Delta_d$ is the Laplacian operator for the $d$ first variables and $L_\alpha$ is the Bessel operator for the last variable defined on $(0,\infty)$ by 
$$L_\alpha u=\frac{\partial^2 u}{\partial x_{d+1}^2}+\frac{2\alpha+1}{x_{d+1}}\frac{\partial u}{\partial x_{d+1}}.$$

The Weinstein operator $\Delta_{W,\alpha}^d$  have remarkable applications in diffrerent branches of
mathematics. For instance, they play a role in Fluid Mechanics \cite{MB}.
\subsection{The eigenfunction of the Weinstein operator }
For all $\lambda=(\lambda_1,...,\lambda_{d+1})\in\mathbb{C}^{d+1}$, the system 
\begin{equation}
\begin{cases}
\frac{\partial^{2}u}{\partial x_{j}^{2}}(x)=-\lambda_{j}^{2}u(x), & \textrm{if}\;1\leq j\leq d
\\L_\alpha u(x)=-\lambda_{d+1}^{2}u(x),
\\ u(0)=1,\;\frac{\partial u}{\partial x_{d+1}}(0)=0,\;\textrm{and}\;\frac{\partial u}{\partial x_{j}}(0)=-i\lambda_j, & \textrm{if}\;1\leq j\leq d,
\end{cases}
\end{equation}
 has a unique solution on $\mathbb{R}^{d+1}$, denoted by $\Lambda_{\alpha}^d(\lambda,.),$ and given by
\begin{equation}
\Lambda_{\alpha}^d(\lambda,x)=e^{-i<x^\prime,\lambda^\prime>}j_\alpha(x_{d+1}\lambda_{d+1})
\end{equation}
 where $x=(x^\prime,x_{d+1}),\; \lambda=(\lambda^\prime,\lambda_{d+1})$ and $j_\alpha$ is is the normalized Bessel function of index $\alpha$ defined by
$$j_\alpha(x)=\Gamma(\alpha+1)\sum_{k=0}^\infty\frac{(-1)^k x^{2k}}{2^k k!\Gamma(\alpha+k+1)}.$$
The function $(\lambda,x)\mapsto\Lambda_{\alpha}^d(\lambda,x)$ has a unique extension to $\mathbb{C}^{d+1}\times\mathbb{C}^{d+1}$, and satisfied the following properties:\\
\begin{propo}
\noindent i). For all $(\lambda,x)\in \mathbb{C}^{d+1}\times\mathbb{C}^{d+1}$ we have 
\begin{equation}
\Lambda_{\alpha}^d(\lambda,x)=\Lambda_{\alpha}^d(x,\lambda)
\end{equation}
\noindent ii). For all $(\lambda,x)\in \mathbb{C}^{d+1}\times\mathbb{C}^{d+1}$ we have 
\begin{equation}
\Lambda_{\alpha}^d(\lambda,-x)=\Lambda_{\alpha}^d(-\lambda,x)
\end{equation}
\noindent iii). For all $(\lambda,x)\in \mathbb{C}^{d+1}\times\mathbb{C}^{d+1}$ we get
\begin{equation}
\Lambda_{\alpha}^d(\lambda,0)=1.
\end{equation}
\noindent vi). For all $\nu\in\mathbb{N}^{d+1},\;x\in\mathbb{R}^{d+1}$ and $\lambda\in\mathbb{C}^{d+1}$ we have
\begin{equation}\label{klk}
 \left|D_\lambda^\nu\Lambda_{\alpha}^d(\lambda,x)\right|\leq\left\|x\right\|^{\left|\nu\right|}e^{\left\|x\right\|\left\|\Im \lambda\right\|}
\end{equation}
where $D_\lambda^\nu=\partial^\nu/(\partial\lambda_1^{\nu_1}...\partial\lambda_{d+1}^{\nu_{d+1}})$ and $\left|\nu\right|=\nu_1+...+\nu_{d+1}.$ In particular, for all $(\lambda,x)\in \mathbb{R}^{d+1}\times\mathbb{R}^{d+1}$, we have
\begin{equation}
\left|\Lambda_{\alpha}^d(\lambda,x)\right|\leq 1.
\end{equation}
\end{propo}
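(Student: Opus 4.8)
The plan is to treat the three algebraic identities (i)--(iii) as immediate consequences of the closed form
$$\Lambda_{\alpha}^d(\lambda,x)=e^{-i\langle x^\prime,\lambda^\prime\rangle}j_\alpha(x_{d+1}\lambda_{d+1}),$$
and to reserve the genuine work for the derivative estimate \eqref{klk}. For (i) I would note that the Euclidean pairing $\langle x^\prime,\lambda^\prime\rangle$ is symmetric and that $x_{d+1}\lambda_{d+1}=\lambda_{d+1}x_{d+1}$, so interchanging $x$ and $\lambda$ leaves the right-hand side unchanged. For (ii), since $-x=(-x^\prime,x_{d+1})$ and $-\lambda=(-\lambda^\prime,\lambda_{d+1})$, both $\Lambda_{\alpha}^d(\lambda,-x)$ and $\Lambda_{\alpha}^d(-\lambda,x)$ reduce to $e^{i\langle x^\prime,\lambda^\prime\rangle}j_\alpha(x_{d+1}\lambda_{d+1})$, the Bessel factor being untouched by the sign change in the first $d$ variables. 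For (iii), evaluating at $x=0$ gives $e^{0}j_\alpha(0)$, and the $k=0$ term of the defining series yields $j_\alpha(0)=1$.

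The core is \eqref{klk}. The first key step is to record a bound for the derivatives of the normalized Bessel function through its Poisson representation
$$j_\alpha(z)=c_\alpha\int_{-1}^{1}(1-t^2)^{\alpha-1/2}e^{izt}\,dt,\qquad c_\alpha\int_{-1}^{1}(1-t^2)^{\alpha-1/2}\,dt=1,$$
valid for $\alpha>-1/2$. Differentiating under the integral brings down factors $(it)^n$ with $|t|\leq 1$, and since $|e^{izt}|=e^{-t\Im z}\leq e^{|\Im z|}$ on $[-1,1]$, I obtain the uniform estimate $|j_\alpha^{(n)}(z)|\leq e^{|\Im z|}$ for every $n\in\N$. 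The second step exploits the product structure: because $\Lambda_{\alpha}^d$ factors as a function of $\lambda^\prime$ times a function of $\lambda_{d+1}$, the mixed derivative $D_\lambda^\nu$ splits, producing $\prod_{j=1}^{d}(-ix_j)^{\nu_j}\,e^{-i\langle x^\prime,\lambda^\prime\rangle}$ from the exponential factor and $x_{d+1}^{\nu_{d+1}}\,j_\alpha^{(\nu_{d+1})}(x_{d+1}\lambda_{d+1})$ from the Bessel factor.

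It then remains to take moduli (recall $x$ is real) and collapse the result into the claimed form. Taking absolute values gives the polynomial weight $\prod_{j=1}^{d+1}|x_j|^{\nu_j}$ times $\exp\!\big(\langle x^\prime,\Im\lambda^\prime\rangle+|x_{d+1}|\,|\Im\lambda_{d+1}|\big)$, using $|e^{-i\langle x^\prime,\lambda^\prime\rangle}|=e^{\langle x^\prime,\Im\lambda^\prime\rangle}$ and the Bessel bound above. For the polynomial factor, each $|x_j|\leq\|x\|$ yields $\prod_{j}|x_j|^{\nu_j}\leq\|x\|^{|\nu|}$. For the exponent, applying Cauchy--Schwarz first componentwise and then to the two-vector $(\|x^\prime\|,|x_{d+1}|)$ paired with $(\|\Im\lambda^\prime\|,|\Im\lambda_{d+1}|)$ shows
$$\langle x^\prime,\Im\lambda^\prime\rangle+|x_{d+1}|\,|\Im\lambda_{d+1}|\leq\|x^\prime\|\,\|\Im\lambda^\prime\|+|x_{d+1}|\,|\Im\lambda_{d+1}|\leq\|x\|\,\|\Im\lambda\|,$$
which delivers \eqref{klk}; the special case $|\Lambda_{\alpha}^d(\lambda,x)|\leq1$ is the instance $\nu=0$, $\lambda\in\R^{d+1}$.

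I expect the only delicate point to be the uniform derivative bound on $j_\alpha$: one must be sure that the integrability of the Poisson kernel for $\alpha>-1/2$ legitimizes differentiation under the integral sign and that the factor $(it)^n$ remains bounded by $1$ on the interval of integration. Once that estimate is in hand, the factorization of $D_\lambda^\nu$ and the two Cauchy--Schwarz reductions are entirely routine.
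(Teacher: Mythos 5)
Your proof is correct, but there is nothing in the paper to compare it against: the proposition is stated as background material and quoted without proof from the references \cite{B1,B2}, so your argument supplies what the paper omits. Your route is the standard one and almost certainly the one in those references: (i)--(iii) read off directly from the product formula $\Lambda_{\alpha}^d(\lambda,x)=e^{-i\langle x^\prime,\lambda^\prime\rangle}j_\alpha(x_{d+1}\lambda_{d+1})$ together with the convention $-x=(-x^\prime,x_{d+1})$, and the estimate (\ref{klk}) from the Poisson representation of $j_\alpha$ (valid precisely for $\alpha>-1/2$, the paper's standing assumption), the factorization of $D_\lambda^\nu$ across the exponential and Bessel factors, and the two Cauchy--Schwarz reductions collapsing $\langle x^\prime,\Im\lambda^\prime\rangle+|x_{d+1}|\,|\Im\lambda_{d+1}|$ into $\|x\|\,\|\Im\lambda\|$. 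All of these steps check out, including the justification of differentiation under the integral sign, since $(1-t^2)^{\alpha-1/2}$ is integrable on $(-1,1)$ and $|(it)^n e^{izt}|\leq e^{|\Im z|}$ there. One small reconciliation worth recording: the series the paper prints for $j_\alpha$ has $2^k$ in the denominator where the standard normalized Bessel function has $2^{2k}$; since $j_\alpha$ is characterized by the Bessel initial value problem in the system defining $\Lambda_{\alpha}^d$, the intended function is the standard one, so your use of the Poisson representation and of the value $j_\alpha(0)=1$ is legitimate despite the typo.
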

\subsection{The Weinstein transform}
\begin{define} The Weinstein transform  is given for $\varphi\in L^1_\alpha(\mathbb{R}^{d+1}_+)$  by 
\begin{equation}
\mathcal{F}_{W,\alpha}(\varphi)(\lambda)=\int_{\mathbb{R}^{d+1}_{+}}\varphi(x)\Lambda_{\alpha}^d(\lambda,x)d\mu_\alpha(x),\;\;\lambda\in\mathbb{R}^{d+1}_{+},
\end{equation}
where $\mu_\alpha$ is the measure on $\mathbb{R}^{d+1}_+$ given by the relation (\ref{mesure}).
\end{define}

Some basic properties of this transform are as follows. For the proofs, we refer \cite{B1, B2}.  

\begin{propo}
\begin{enumerate}
	\item For all $\varphi\in L^1_\alpha(\mathbb{R}^{d+1}_+)$, the function $\mathcal{F}_{W,\alpha}(\varphi)$  is continuous on $\mathbb{R}^{d+1}_+$ and we have
	\begin{equation}
	\left\|\mathcal{F}_{W,\alpha}\varphi\right\|_{\alpha,\infty}\leq\left\|\varphi\right\|_{\alpha,1}.
	\end{equation}
	\item   The Weinstein transform is a topological isomorphism from $\mathcal{S}_*(\mathbb{R}^{d+1}_+)$ onto itself. The inverse transform is given by
	\begin{equation}
	\mathcal{F}_{W,\alpha}^{-1}\varphi(\lambda)=C_{\alpha,d} \mathcal{F}_{W,\alpha}\varphi(-\lambda),\;\textrm{for\;all}\;\lambda\in\mathbb{R}^{d+1}_+,
	\end{equation}
	where 
	\begin{equation}
	C_{\alpha,d}=\frac{1}{(2\pi)^d2^{2\alpha}\Gamma^2(\alpha+1)}.
	\end{equation}
	\item Parseval formula: For all $\varphi, \phi\in \mathcal{S}_*(\mathbb{R}^{d+1}_+)$, we have
	\begin{equation}\label{MM}
	\int_{\mathbb{R}^{d+1}_+}\varphi(x)\overline{\phi(x)}d\mu_\alpha(x)=C_{\alpha,d}\int_{\mathbb{R}^{d+1}_+}\mathcal{F}_{W,\alpha}(\varphi)(x)\overline{\mathcal{F}_{W,\alpha}(\phi)(x)}d\mu_\alpha(x)
	\end{equation}
\item Plancherel formula: For all $\varphi\in \mathcal{S}_*(\mathbb{R}^{d+1}_+)$, we have
\begin{equation}
\int_{\mathbb{R}^{d+1}_+}\left|\varphi(x)\right|^2d\mu_\alpha(x)=C_{\alpha,d}\int_{\mathbb{R}^{d+1}_+}\left|\mathcal{F}_{W,\alpha}\varphi(x)\right|^2d\mu_\alpha(x)
\end{equation}
\item Inversion formula: If $\varphi\in\mathcal{A}_\alpha(\mathbb{R}^{d+1}_+)$, then 
\begin{equation}\label{inv}
\varphi(\lambda)=C_{\alpha,d}\int_{\mathbb{R}^{d+1}_+}\mathcal{F}_{W,\alpha}\varphi(x)\Lambda_{\alpha}^d(-\lambda,x)d\mu_\alpha(x),\;\textrm{a.e. }\lambda\in\mathbb{R}^{d+1}_+
\end{equation}
\end{enumerate}
\end{propo}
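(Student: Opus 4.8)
The plan is to exploit the product structure of the kernel. Since $\Lambda_\alpha^d(\lambda,x)=e^{-i\langle x',\lambda'\rangle}j_\alpha(x_{d+1}\lambda_{d+1})$, the Weinstein transform factorizes as the composition of the classical $d$-dimensional Fourier transform in the variables $x'$ with the normalized Hankel transform of index $\alpha$ in the last variable $x_{d+1}$, taken against the measure $x_{d+1}^{2\alpha+1}\,dx_{d+1}$. Each of the five assertions will then be obtained by combining the corresponding classical statement for the Fourier transform on $\R^d$ with the analogous well-documented statement for the Hankel transform on the even Schwartz space of the half-line, the two factors being decoupled by Fubini's theorem. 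Recall that in the paper's notation $-\lambda=(-\lambda',\lambda_{d+1})$.

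For item (1) I would argue directly. The bound $|\Lambda_\alpha^d(\lambda,x)|\le 1$ on $\R^{d+1}\times\R^{d+1}$ from the previous Proposition gives
\[
|\mathcal F_{W,\alpha}\varphi(\lambda)|\le\int_{\R^{d+1}_+}|\varphi(x)|\,|\Lambda_\alpha^d(\lambda,x)|\,d\mu_\alpha(x)\le\|\varphi\|_{\alpha,1},
\]
which is the claimed estimate, uniform in $\lambda$. Continuity is then immediate from dominated convergence, since $\lambda\mapsto\Lambda_\alpha^d(\lambda,x)$ is continuous and the integrand is dominated by the fixed $L^1_\alpha$ function $|\varphi|$.

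The principal obstacle is item (2). I would first record that the Hankel transform maps the even Schwartz space of $(0,\infty)$ continuously onto itself; the requisite seminorm estimates rest on the two intertwining relations sending $L_\alpha$ to multiplication by $-\lambda_{d+1}^2$ and sending multiplication by $-x_{d+1}^2$ to $L_\alpha$ on the transform side, which allow one to trade every derivative and every power of the variable on one side for the corresponding object on the other. Tensoring this with the classical fact that the Fourier transform is a topological automorphism of $\mathcal S(\R^d)$ yields that $\mathcal F_{W,\alpha}$ is a topological isomorphism of $\mathcal S_*(\R^{d+1}_+)$ onto itself. The explicit inverse formula $\mathcal F_{W,\alpha}^{-1}\varphi(\lambda)=C_{\alpha,d}\,\mathcal F_{W,\alpha}\varphi(-\lambda)$ then follows by assembling the Fourier inversion constant $(2\pi)^{-d}$ with the Hankel inversion constant $2^{-2\alpha}\Gamma(\alpha+1)^{-2}$, and by using the parity relation $\Lambda_\alpha^d(\lambda,-x)=\Lambda_\alpha^d(-\lambda,x)$ together with the evenness of $j_\alpha$ in $x_{d+1}$ to replace $\lambda$ by $-\lambda$. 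The only delicate point throughout is the Schwartz-space continuity of the Hankel factor, which is where the index condition $\alpha>-1/2$ enters.

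Finally, items (3)--(5) are corollaries of (2). For the Parseval identity I would apply the tensor Parseval relations for the Fourier and Hankel transforms and separate the integrals by Fubini; the Plancherel formula (4) is the special case $\phi=\varphi$. For the inversion formula (5) on $\mathcal A_\alpha(\R^{d+1}_+)$ I would start from the Schwartz-space inversion in (2) and extend it to $\varphi\in\mathcal A_\alpha$, where both $\varphi$ and $\mathcal F_{W,\alpha}\varphi$ lie in $L^1_\alpha$, by a density argument, the integral $C_{\alpha,d}\int_{\R^{d+1}_+}\mathcal F_{W,\alpha}\varphi(x)\Lambda_\alpha^d(-\lambda,x)\,d\mu_\alpha(x)$ being absolutely convergent and agreeing with $\varphi$ almost everywhere.
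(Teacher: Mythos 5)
The paper offers no proof of this Proposition at all: it simply defers to the references \cite{B1, B2}, so there is nothing in the text to compare your argument against line by line. Your factorization strategy --- writing $\mathcal{F}_{W,\alpha}$ as the $d$-dimensional Fourier transform in $x'$ composed with the normalized Hankel transform of index $\alpha$ in $x_{d+1}$ against $x_{d+1}^{2\alpha+1}dx_{d+1}$ --- is the standard route (and is essentially how the cited literature proceeds), item (1) is done correctly, and your assembly of the inversion constant $(2\pi)^{-d}\cdot 2^{-2\alpha}\Gamma(\alpha+1)^{-2}=C_{\alpha,d}$ and of the parity convention $-\lambda=(-\lambda',\lambda_{d+1})$ is right.

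Two steps, however, need tightening. First, in item (2) the word ``tensoring'' hides a real issue: $\mathcal{S}_*(\mathbb{R}^{d+1}_+)$ is not the algebraic tensor product of $\mathcal{S}(\mathbb{R}^d)$ with the even Schwartz space of the half-line, so you cannot literally combine the two one-factor automorphisms; you either need completed tensor products of these Fr\'echet spaces, or (cleaner) you should present $\mathcal{F}_{W,\alpha}$ as the composition of the \emph{partial} Fourier transform in $x'$ and the \emph{partial} Hankel transform in $x_{d+1}$, and check via the intertwining relations you quote that each partial transform maps $\mathcal{S}_*(\mathbb{R}^{d+1}_+)$ continuously onto itself, with seminorm estimates uniform in the spectator variables. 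Second, in item (5) the phrase ``by a density argument'' is too thin: it presupposes that $\mathcal{S}_*(\mathbb{R}^{d+1}_+)$ is dense in $\mathcal{A}_\alpha(\mathbb{R}^{d+1}_+)$ for the norm $\|\varphi\|_{\alpha,1}+\|\mathcal{F}_{W,\alpha}\varphi\|_{\alpha,1}$, which is not obvious and is never established. A self-contained fix is the multiplication formula
\begin{equation*}
\int_{\mathbb{R}^{d+1}_+}\mathcal{F}_{W,\alpha}\varphi(x)\,g(x)\,d\mu_\alpha(x)=\int_{\mathbb{R}^{d+1}_+}\varphi(x)\,\mathcal{F}_{W,\alpha}g(x)\,d\mu_\alpha(x),
\end{equation*}
valid for $\varphi\in L^1_\alpha(\mathbb{R}^{d+1}_+)$ and $g\in\mathcal{S}_*(\mathbb{R}^{d+1}_+)$ by Fubini and the symmetry $\Lambda_\alpha^d(\lambda,x)=\Lambda_\alpha^d(x,\lambda)$: using it together with the Schwartz-space inversion from item (2), one checks that the bounded continuous function $\lambda\mapsto C_{\alpha,d}\int\mathcal{F}_{W,\alpha}\varphi(x)\Lambda_\alpha^d(-\lambda,x)\,d\mu_\alpha(x)$ has the same integral as $\varphi$ against every test function, hence equals $\varphi$ almost everywhere. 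Alternatively, an approximate-identity argument with the heat kernel works, and is exactly what the paper itself carries out later (its Corollary on reobtaining the inversion formula via Corollary \ref{456789}) --- but note that that corollary already uses items (2)--(3), so you cannot invoke it here without circularity.
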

\subsection{The translation operator associated with the Weinstein operator}
\begin{define} The translation operator $\tau^\alpha_x,\;x\in\mathbb{R}^{d+1}_+$ associated with the Weinstein operator $\Delta_\alpha^d$, is defined for a continuous function $\varphi$ on $\mathbb{R}^{d+1}_+$ which is even with respect to the last variable and for all $y\in\mathbb{R}^{d+1}_+$ by
$$\tau^\alpha_x\varphi(y)=\frac{\Gamma(\alpha+1)}{\sqrt{\pi}\Gamma(\alpha+1/2)}\int_0^\pi\varphi\left(x^\prime+y\prime,\sqrt{x^2_{d+1}+y^2_{d+1}+2x_{d+1}y_{d+1}\cos\theta}\right)\left(\sin\theta\right)^{2\alpha}d\theta.$$
\end{define}
By using the Weinstein kernel, we can also define a generalized translation, for
a function $\varphi\in\mathcal{S}_*(\mathbb{R}^{d+1})$ and $y\in\mathbb{R}^{d+1}_+$ the generalized translation $\tau^\alpha_x\varphi$ is defined by the following relation
\begin{equation}\label{MMM}
\mathcal{F}_{W,\alpha}(\tau^\alpha_x\varphi)(y)=\Lambda^d_\alpha(x,y)\mathcal{F}_{W,\alpha}(\varphi)(y)
\end{equation}

The following proposition summarizes some properties of the Weinstein
translation operator.

\begin{propo} The translation operator $\tau^\alpha_x,\;x\in\mathbb{R}^{d+1}_+$ satisfies the following properties:\\ 
i). For $\varphi\in\mathbb{C}_*(\mathbb{R}^{d+1})$, we have for all $x,y\in\mathbb{R}^{d+1}_+$
$$\tau^\alpha_x\varphi(y)=\tau^\alpha_y\varphi(x)\;\textrm{and}\;\tau^\alpha_0\varphi=\varphi.$$
ii). Let $\varphi\in L^p_\alpha(\mathbb{R}^{d+1}_+),\;1\leq p\leq \infty$ and $x\in\mathbb{R}^{d+1}_+$. Then  $\tau^\alpha_x\varphi$ belongs to $L^p_\alpha(\mathbb{R}^{d+1}_+)$ and we have
$$\left\| \tau^\alpha_x\varphi\right\|_{\alpha,p}\leq \left\|\varphi\right\|_{\alpha,p}$$
\end{propo}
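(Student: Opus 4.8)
The plan is to treat the two assertions separately; part (ii) rests on a measure-preserving identity for the Bessel translation in the last variable, which is the only nontrivial point. For part (i), both claims follow by inspecting the defining integral. Write $b_\alpha=\Gamma(\alpha+1)/(\sqrt{\pi}\,\Gamma(\alpha+\tfrac12))$ for the normalizing constant. The first argument $x'+y'$ and the radical $\sqrt{x_{d+1}^2+y_{d+1}^2+2x_{d+1}y_{d+1}\cos\theta}$ are both manifestly invariant under the interchange $x\leftrightarrow y$, as are the weight $(\sin\theta)^{2\alpha}$ and the constant $b_\alpha$; hence $\tau^\alpha_x\varphi(y)=\tau^\alpha_y\varphi(x)$. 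For $\tau^\alpha_0\varphi=\varphi$, putting $x=0$ collapses the first argument to $y'$ and the radical to $y_{d+1}$, so that $\tau^\alpha_0\varphi(y)=b_\alpha\,\varphi(y)\int_0^\pi(\sin\theta)^{2\alpha}d\theta$. It remains to note the Beta evaluation $\int_0^\pi(\sin\theta)^{2\alpha}d\theta=\sqrt{\pi}\,\Gamma(\alpha+\tfrac12)/\Gamma(\alpha+1)=b_\alpha^{-1}$, which gives $\tau^\alpha_0\varphi=\varphi$.

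For part (ii) I would first record two structural facts. The same Beta computation with $\varphi\equiv 1$ gives $\tau^\alpha_x\mathbf 1=\mathbf 1$, and since $(\sin\theta)^{2\alpha}\ge 0$ the operator $\tau^\alpha_x$ is an average against the probability measure $b_\alpha(\sin\theta)^{2\alpha}d\theta$ on $[0,\pi]$; the case $p=\infty$ is then immediate. The heart of the matter is the measure-preserving identity
\[
\int_{\mathbb{R}^{d+1}_+}\tau^\alpha_x h(y)\,d\mu_\alpha(y)=\int_{\mathbb{R}^{d+1}_+}h(y)\,d\mu_\alpha(y),\qquad h\ge 0 .
\]
To prove it I would apply Fubini, shift the first $d$ coordinates by $x'$ (Lebesgue measure being translation invariant), and reduce to the one-variable statement, with $G(r)=\int_{\mathbb{R}^d}h(u',r)\,du'$,
\[
b_\alpha\int_0^\infty\!\!\int_0^\pi G\!\left(\sqrt{x_{d+1}^2+t^2+2x_{d+1}t\cos\theta}\right)(\sin\theta)^{2\alpha}\,d\theta\;t^{2\alpha+1}\,dt=\int_0^\infty G(r)\,r^{2\alpha+1}\,dr .
\]
Here I would change variables from $\theta$ to $r=\sqrt{x_{d+1}^2+t^2+2x_{d+1}t\cos\theta}$, using $r\,dr=-x_{d+1}t\,\sin\theta\,d\theta$ and $\sin\theta=\sqrt{[(x_{d+1}+t)^2-r^2][r^2-(x_{d+1}-t)^2]}\,/(2x_{d+1}t)$, to rewrite $\tau^\alpha_{x_{d+1}}$ as an integral against a kernel $K(x_{d+1},t,r)$ supported on $|x_{d+1}-t|\le r\le x_{d+1}+t$.

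The decisive observation is that $[(a+b)^2-r^2][r^2-(a-b)^2]=(a+b+r)(-a+b+r)(a-b+r)(a+b-r)$ is Heron's symmetric expression, so that after collecting the factor $(abr)^{-2\alpha}$ the kernel $K$ is fully symmetric in its three arguments. This symmetry converts the inner integral $\int_0^\infty K(x_{d+1},t,r)\,t^{2\alpha+1}dt$ into $\tau^\alpha_{x_{d+1}}\mathbf 1(r)=1$, which yields the identity. With these in hand, the bound for $1\le p<\infty$ follows from Jensen's inequality: convexity of $s\mapsto|s|^p$ together with the probability-average form gives $|\tau^\alpha_x\varphi(y)|^p\le\tau^\alpha_x(|\varphi|^p)(y)$, and integrating against $d\mu_\alpha$ and applying the measure-preserving identity to $h=|\varphi|^p$ gives $\|\tau^\alpha_x\varphi\|_{\alpha,p}\le\|\varphi\|_{\alpha,p}$. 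I expect the measure-preserving identity to be the main obstacle: everything hinges on recognizing the Heron symmetry of the kernel, which is what makes the Bessel translation $\mu_\alpha$-preserving; once that symmetry is in place the remaining Fubini and convexity steps are routine.
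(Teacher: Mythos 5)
Your proposal is correct, but there is an important caveat about the comparison: the paper does not prove this proposition at all. It is stated as a summary of known properties of the Weinstein translation, with the proofs deferred to the references of Ben Nahia and Ben Salem \cite{B1, B2}. So what you have written is not an alternative to the paper's argument but a self-contained replacement for a citation, and it is the classical one for Bessel--Kingman type translations. Part (i) is indeed immediate from the symmetry of the defining integral together with the Beta evaluation $\int_0^\pi(\sin\theta)^{2\alpha}d\theta=\sqrt{\pi}\,\Gamma(\alpha+\tfrac12)/\Gamma(\alpha+1)$, which converges precisely because $\alpha>-1/2$. For part (ii), your chain of reductions checks out: the Jacobian relation $r\,dr=-x_{d+1}t\sin\theta\,d\theta$ is right; the kernel $c_\alpha\big([(a+t)^2-r^2][r^2-(a-t)^2]\big)^{\alpha-\frac12}(atr)^{-2\alpha}$ on the region $|a-t|\le r\le a+t$ is genuinely symmetric in $(a,t,r)$ by Heron's identity (and the support condition, being the triangle inequality, is symmetric too); the exponent $\alpha-\tfrac12>-1$ keeps the kernel integrable at the endpoints; Tonelli legitimizes all interchanges since everything is nonnegative; and symmetry in the last two arguments does convert the inner integral into $\tau^\alpha_{x_{d+1}}\mathbf 1(r)=1$, giving the measure-preserving identity, after which Jensen closes the case $1\le p<\infty$ and the probability-average form gives $p=\infty$. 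Two small points deserve a sentence in a polished write-up: the change of variables degenerates when $x_{d+1}=0$ or $t=0$, which is harmless since these form a $\mu_\alpha$-null set; and for complex-valued $\varphi$ you should first pass to $|\tau^\alpha_x\varphi|\le\tau^\alpha_x|\varphi|$ before applying Jensen. An equivalent packaging found in \cite{B1, B2} is to establish the product formula $\tau^\alpha_x\Lambda^d_\alpha(\lambda,\cdot)(y)=\Lambda^d_\alpha(\lambda,x)\Lambda^d_\alpha(\lambda,y)$ and deduce positivity and measure preservation from it; your route buys elementarity, as it never invokes the Weinstein kernel or the transform.
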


Note that the  $\mathcal{A}_\alpha(\mathbb{R}^{d+1}_+)$ is contained in the intersection of $L^1_\alpha(\mathbb{R}^{d+1}_+)$ and $L^\infty_\alpha(\mathbb{R}^{d+1}_+)$  and hence is a subspace of $L^2_\alpha(\mathbb{R}^{d+1}_+).$ For $\varphi\in\mathcal{A}_\alpha(\mathbb{R}^{d+1}_+)$ we have
\begin{equation}\label{tau}
\tau^\alpha_x\varphi(y)=C_{\alpha,d} \int_{\mathbb{R}^{d+1}_+}\Lambda^d_\alpha(x,z)\Lambda^d_\alpha(-y,z)\mathcal{F}_{W,\alpha}\varphi(z)d\mu_{\alpha}(z).
\end{equation}

By using the generalized translation, we define the generalized convolution product $\varphi*_W\psi$ of the functions $\varphi,\;\psi\in L^1_\alpha(\mathbb{R}^{d+1}_+)$ as follows
\begin{equation}
\varphi*_W\psi(x)=\int_{\mathbb{R}^{d+1}_+}\tau^\alpha_x\varphi(-y)\psi(y)d\mu_\alpha(y).
\end{equation}
\\
This convolution is commutative and associative, and it satisfies the following properties:
\\
\begin{propo}
i). For all $\varphi,\psi\in L^1_\alpha(\mathbb{R}^{d+1}_+),$\;(resp. $\varphi,\psi\in \mathcal{S}_*(\mathbb{R}^{d+1}_+)$), then $\varphi*_W\psi\in L^1_\alpha(\mathbb{R}^{d+1}_+),$\;(resp. $\varphi*_W\psi\in \mathcal{S}_*(\mathbb{R}^{d+1}_+)$) and we have
\begin{equation}
\mathcal{F}_{W,\alpha}(\varphi*_W\psi)=\mathcal{F}_{W,\alpha}(\varphi)\mathcal{F}_{W,\alpha}(\psi)
\end{equation}
ii). Let $p, q, r\in [1,\infty],$ such that $\frac{1}{p}+\frac{1}{q}-\frac{1}{r}=1.$ Then for all $\varphi\in L^p_\alpha(\mathbb{R}^{d+1}_+)$ and  $\psi\in L^q_\alpha(\mathbb{R}^{d+1}_+)$ the function $\varphi*_W\psi$ belongs to  $L^r_\alpha(\mathbb{R}^{d+1}_+)$ and we have
\begin{equation}
\left\|\varphi*_W\psi\right\|_{\alpha,r}\leq\left\|\varphi\right\|_{\alpha,p}\left\|\psi\right\|_{\alpha,q}.
\end{equation}
\end{propo}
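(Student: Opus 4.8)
The plan is to deduce the whole proposition from three facts already available: the contraction estimate $\|\tau^\alpha_x\varphi\|_{\alpha,p}\le\|\varphi\|_{\alpha,p}$, the symmetry $\tau^\alpha_x\varphi(-y)=\tau^\alpha_{-y}\varphi(x)$ (legitimate since $-y=(-y',y_{d+1})\in\mathbb{R}^{d+1}_+$), and the exchange relation (\ref{MMM}), combined with Fubini/Tonelli and H\"older's inequality. I would treat part (ii) first, since the $L^1$ assertion of part (i) is exactly the case $p=q=r=1$; the product formula and the Schwartz-stability are then handled separately. Throughout I use that $d\mu_\alpha$ is invariant under the reflection $y\mapsto -y$, because its weight only involves $y_{d+1}$.

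For part (ii) I would run the Young-inequality argument adapted to $d\mu_\alpha$. Set $s=\tfrac{pr}{r-p}$ and $t=\tfrac{qr}{r-q}$, so that $\tfrac1r+\tfrac1s+\tfrac1t=\tfrac1p+\tfrac1q-\tfrac1r=1$. Starting from $|\varphi*_W\psi(x)|\le\int_{\mathbb{R}^{d+1}_+}|\tau^\alpha_x\varphi(-y)|\,|\psi(y)|\,d\mu_\alpha(y)$, I factor the integrand as the product of $|\tau^\alpha_x\varphi(-y)|^{p/r}|\psi(y)|^{q/r}$, $|\tau^\alpha_x\varphi(-y)|^{(r-p)/r}$ and $|\psi(y)|^{(r-q)/r}$, and apply H\"older with the exponents $r,s,t$. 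Using $\int_{\mathbb{R}^{d+1}_+}|\tau^\alpha_x\varphi(-y)|^{p}\,d\mu_\alpha(y)\le\|\varphi\|_{\alpha,p}^{p}$ this gives the pointwise bound
\[
|\varphi*_W\psi(x)|\le \|\varphi\|_{\alpha,p}^{p/s}\,\|\psi\|_{\alpha,q}^{q/t}\Big(\int_{\mathbb{R}^{d+1}_+}|\tau^\alpha_x\varphi(-y)|^{p}|\psi(y)|^{q}\,d\mu_\alpha(y)\Big)^{1/r}.
\]
Raising to the $r$-th power, integrating in $x$, and interchanging the order of integration by Tonelli, the inner integral $\int_{\mathbb{R}^{d+1}_+}|\tau^\alpha_x\varphi(-y)|^{p}\,d\mu_\alpha(x)$ becomes $\|\tau^\alpha_{-y}\varphi\|_{\alpha,p}^{p}\le\|\varphi\|_{\alpha,p}^{p}$ after the symmetry; the exponent identities $p(r/s+1)=q(r/t+1)=r$ then yield $\|\varphi*_W\psi\|_{\alpha,r}\le\|\varphi\|_{\alpha,p}\|\psi\|_{\alpha,q}$. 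The degenerate exponents ($r=\infty$, which forces $\tfrac1p+\tfrac1q=1$, and $p=r$ or $q=r$, which forces $\psi$ resp.\ $\varphi$ in $L^1_\alpha$) lie outside this three-exponent split and I would dispatch them directly by H\"older's and Minkowski's integral inequalities.

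For the product formula in part (i) the $L^1$ bound just obtained legitimizes Fubini. Applying $\mathcal{F}_{W,\alpha}$ to the definition and exchanging orders, $\mathcal{F}_{W,\alpha}(\varphi*_W\psi)(\lambda)=\int_{\mathbb{R}^{d+1}_+}\psi(y)\big[\int_{\mathbb{R}^{d+1}_+}\tau^\alpha_x\varphi(-y)\,\Lambda^d_\alpha(\lambda,x)\,d\mu_\alpha(x)\big]\,d\mu_\alpha(y)$. Replacing $\tau^\alpha_x\varphi(-y)$ by $\tau^\alpha_{-y}\varphi(x)$ via the symmetry, the bracket equals $\mathcal{F}_{W,\alpha}(\tau^\alpha_{-y}\varphi)(\lambda)=\Lambda^d_\alpha(-y,\lambda)\,\mathcal{F}_{W,\alpha}\varphi(\lambda)$ by (\ref{MMM}), and the surviving $y$-integral is identified with $\mathcal{F}_{W,\alpha}\psi(\lambda)$ once properties (i) and (ii) of the kernel $\Lambda^d_\alpha$ are used to absorb the reflection $-y$. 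Alternatively, for $\varphi,\psi\in\mathcal{S}_*(\mathbb{R}^{d+1}_+)\subset\mathcal{A}_\alpha(\mathbb{R}^{d+1}_+)$ one may substitute the representation (\ref{tau}) and apply Fubini to obtain directly $\varphi*_W\psi(x)=C_{\alpha,d}\int_{\mathbb{R}^{d+1}_+}\Lambda^d_\alpha(x,z)\,\mathcal{F}_{W,\alpha}\varphi(z)\,\mathcal{F}_{W,\alpha}\psi(z)\,d\mu_\alpha(z)$, exhibiting $\varphi*_W\psi$ as an inverse Weinstein transform of the product $\mathcal{F}_{W,\alpha}\varphi\cdot\mathcal{F}_{W,\alpha}\psi$. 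The Schwartz statement is then immediate: since $\mathcal{F}_{W,\alpha}$ is a topological isomorphism of $\mathcal{S}_*(\mathbb{R}^{d+1}_+)$ and a product of two Schwartz functions is Schwartz, $\varphi*_W\psi=\mathcal{F}_{W,\alpha}^{-1}\big(\mathcal{F}_{W,\alpha}\varphi\cdot\mathcal{F}_{W,\alpha}\psi\big)$ lies in $\mathcal{S}_*(\mathbb{R}^{d+1}_+)$.

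The main obstacle is the bookkeeping of the reflection $y\mapsto -y$ and the attendant sign conventions in the kernel. All the analytic content (Fubini, H\"older, the contraction estimate) is routine once the translation-contraction and the exchange relation are granted; the delicate point is verifying that the factor $\Lambda^d_\alpha(-y,\lambda)$ collapses correctly against $\psi$ to produce $\mathcal{F}_{W,\alpha}\psi(\lambda)$, using $\Lambda^d_\alpha(\lambda,x)=\Lambda^d_\alpha(x,\lambda)$ and $\Lambda^d_\alpha(\lambda,-x)=\Lambda^d_\alpha(-\lambda,x)$ together with the reflection-invariance of $d\mu_\alpha$. I would also double-check the endpoint exponents of Young's inequality separately, as they are not covered by the three-exponent H\"older split.
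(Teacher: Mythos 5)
The first thing to note is that the paper itself contains no proof of this proposition: it sits in the background Section 2, quoted from the references \cite{B1,B2}, so your argument has to be judged on its own rather than against a proof in the text. Your part (ii) is correct and complete. The three-exponent H\"older split with $s=pr/(r-p)$, $t=qr/(r-q)$, the contraction $\left\|\tau^\alpha_x\varphi\right\|_{\alpha,p}\leq\left\|\varphi\right\|_{\alpha,p}$, the symmetry $\tau^\alpha_x\varphi(-y)=\tau^\alpha_{-y}\varphi(x)$, the reflection-invariance of $d\mu_\alpha$, and Tonelli are exactly the needed ingredients, the exponent bookkeeping $p(r/s+1)=q(r/t+1)=r$ checks out, and you are right that the endpoint exponents ($r=\infty$, $p=r$, $q=r$) fall outside the split and need separate, easy treatment. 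Deducing the $L^1$ claim of (i) as the case $p=q=r=1$, and the Schwartz stability from the isomorphism property of $\mathcal{F}_{W,\alpha}$, is also sound.

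The gap is precisely in the step you flag as delicate, and it does not resolve the way you claim. After Fubini and the symmetry, (\ref{MMM}) applied to $\tau^\alpha_{-y}\varphi$ leaves the factor $\Lambda^d_\alpha(-y,\lambda)$, and the kernel identities give $\Lambda^d_\alpha(-y,\lambda)=\Lambda^d_\alpha(\lambda,-y)=\Lambda^d_\alpha(-\lambda,y)$; hence the surviving integral is $\int_{\mathbb{R}^{d+1}_+}\psi(y)\Lambda^d_\alpha(-\lambda,y)\,d\mu_\alpha(y)=\mathcal{F}_{W,\alpha}\psi(-\lambda)$, not $\mathcal{F}_{W,\alpha}\psi(\lambda)$. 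Reflection-invariance of $d\mu_\alpha$ only converts this into the transform of $\psi(-\cdot)$, which is not $\mathcal{F}_{W,\alpha}\psi$ unless $\psi$ is even in the first $d$ variables. Your alternative route has the same defect: substituting (\ref{tau}) yields $\varphi*_W\psi=C_{\alpha,d}\,\mathcal{F}_{W,\alpha}\big(\mathcal{F}_{W,\alpha}\varphi\cdot\mathcal{F}_{W,\alpha}\psi\big)$, which is the \emph{forward} transform of the product, not the inverse one demanded by (\ref{inv}), so applying $\mathcal{F}_{W,\alpha}$ again produces $\mathcal{F}_{W,\alpha}\varphi(-\lambda)\mathcal{F}_{W,\alpha}\psi(-\lambda)$. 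The root cause is not your bookkeeping but an internal inconsistency of the paper: the explicit integral definition of $\tau^\alpha_x$ (a shift by $+x'$ in the first $d$ variables) actually satisfies $\mathcal{F}_{W,\alpha}(\tau^\alpha_x\varphi)=\Lambda^d_\alpha(-x,\cdot)\,\mathcal{F}_{W,\alpha}\varphi$, i.e.\ (\ref{MMM}) with $x$ replaced by $-x$. If you adopt that (consistent) exchange relation, your computation closes exactly: $\mathcal{F}_{W,\alpha}(\tau^\alpha_{-y}\varphi)(\lambda)=\Lambda^d_\alpha(y,\lambda)\mathcal{F}_{W,\alpha}\varphi(\lambda)$ and $\int_{\mathbb{R}^{d+1}_+}\psi(y)\Lambda^d_\alpha(y,\lambda)\,d\mu_\alpha(y)=\mathcal{F}_{W,\alpha}\psi(\lambda)$ by kernel symmetry, with no reflection left over. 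So to make part (i) watertight you must either work with the corrected exchange relation, or record that with (\ref{MMM}) taken literally the product formula only holds in the reflected form.
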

\subsection{Heat functions related to the Weinstein operator}

The generalized heat kernel $E^\alpha_t(x),\;x\in\mathbb{R}^{d+1}_+,\;t>0$  associated with the Weinstein operator $\Delta^d_{W,\alpha}$ is given by 
\begin{equation}
E^\alpha_t(x)=\frac{2}{\pi^{\frac{d}{2}}\Gamma(\alpha+1)(4t)^{\alpha+1+d/2}}e^{-\left\|y\right\|^2/4t},
\end{equation}
which is a solution of the generalized heat equation:
$$\frac{\partial}{\partial t}E^\alpha_t(x)-\Delta^d_{W,\alpha}E^\alpha_t(x)=0,$$
and satisfied the following properties ( see \cite{Khaled}):\\
\begin{propo}
i). For $x\in\mathbb{R}^{d+1}_+,\;t>0$,  the function $\tau^\alpha_xE^\alpha_t$ is nonnegative.\\
ii). For $t>0$, we have
\begin{equation}\label{hhh}
\left\|E^\alpha_t\right\|_{\alpha,1}=1.
\end{equation}
\noindent iii).  For $x\in\mathbb{R}^{d+1}_+,\;t>0$, we have
\begin{equation}\label{;;} 
\int_{\mathbb{R}^{d+1}_+}\tau^\alpha_x E^\alpha_t(y)d\mu_\alpha(y)=1.
\end{equation}
iv). For $t>0$, we have
\begin{equation}\label{MMMM} 
\mathcal{F}_{W,\alpha}E^\alpha_t(x)=e^{-t\left\|x\right\|^2}
\end{equation}
\end{propo}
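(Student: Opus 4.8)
The plan is to prove (iv) first, since properties (ii) and (iii) are quick consequences of it, and then to dispose of (i) separately through the explicit integral representation of the translation operator. For (iv) I would exploit the product structure of both the heat kernel and the Weinstein kernel. Since $E^\alpha_t(x)$ factors as a positive constant times $e^{-\|x'\|^2/4t}$ in the first $d$ variables and $e^{-x_{d+1}^2/4t}$ in the last, while the kernel $\Lambda_\alpha^d(\lambda,x)=e^{-i\langle x',\lambda'\rangle}j_\alpha(x_{d+1}\lambda_{d+1})$ factors the same way and the measure $d\mu_\alpha(x)=x_{d+1}^{2\alpha+1}\,dx'\,dx_{d+1}$ respects this splitting, the integral defining $\mathcal{F}_{W,\alpha}(E^\alpha_t)(\lambda)$ breaks into the product of a $d$-dimensional Euclidean Gaussian integral over $\mathbb{R}^d$ and a one-dimensional Bessel integral over $(0,\infty)$. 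The Euclidean factor is the classical Fourier transform of a Gaussian, giving $(4\pi t)^{d/2}e^{-t\|\lambda'\|^2}$.

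The one-dimensional factor $\int_0^\infty e^{-x_{d+1}^2/4t}\,j_\alpha(x_{d+1}\lambda_{d+1})\,x_{d+1}^{2\alpha+1}\,dx_{d+1}$ is where I expect the real work to lie. I would compute it by inserting the power series of $j_\alpha$ and integrating term by term, using the elementary moment formula $\int_0^\infty e^{-x^2/4t}x^{2\alpha+2k+1}\,dx=\tfrac12(4t)^{\alpha+k+1}\Gamma(\alpha+k+1)$; the factor $\Gamma(\alpha+k+1)$ cancels against the denominator in the series, and the remaining sum is recognized as the Taylor series of $e^{-t\lambda_{d+1}^2}$, up to the prefactor $\tfrac12\Gamma(\alpha+1)(4t)^{\alpha+1}$. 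The only delicate point is the justification of the term-by-term integration, which follows from dominated convergence together with the entire-function growth of $j_\alpha$. Multiplying the two factors and cancelling against the normalizing constant in $E^\alpha_t$ then produces exactly $e^{-t\|\lambda\|^2}$, proving (iv).

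Property (ii) is then immediate: since $E^\alpha_t\geq0$ one has $\|E^\alpha_t\|_{\alpha,1}=\int_{\mathbb{R}^{d+1}_+}E^\alpha_t(x)\,d\mu_\alpha(x)$, and because $\Lambda_\alpha^d(0,x)=1$ (from $\Lambda_\alpha^d(\lambda,0)=1$ and the symmetry $\Lambda_\alpha^d(\lambda,x)=\Lambda_\alpha^d(x,\lambda)$) this integral equals $\mathcal{F}_{W,\alpha}(E^\alpha_t)(0)=e^{-t\|0\|^2}=1$ by (iv). For (iii) I would apply the defining relation (\ref{MMM}), namely $\mathcal{F}_{W,\alpha}(\tau^\alpha_x E^\alpha_t)(y)=\Lambda_\alpha^d(x,y)\,\mathcal{F}_{W,\alpha}(E^\alpha_t)(y)$, and evaluate at $y=0$: the left-hand side becomes $\int_{\mathbb{R}^{d+1}_+}\tau^\alpha_x E^\alpha_t(y)\,d\mu_\alpha(y)$ (again because $\Lambda_\alpha^d(0,\cdot)=1$), while the right-hand side equals $\Lambda_\alpha^d(x,0)\,\mathcal{F}_{W,\alpha}(E^\alpha_t)(0)=1\cdot1=1$.

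Finally, (i) follows directly from the integral representation in the definition of $\tau^\alpha_x$. Writing $\tau^\alpha_x E^\alpha_t(y)$ out, the integrand is the product of the positive constant appearing in $E^\alpha_t$, the positive Gaussian factors $e^{-\|x'+y'\|^2/4t}$ and $e^{-(x_{d+1}^2+y_{d+1}^2+2x_{d+1}y_{d+1}\cos\theta)/4t}$, and the weight $(\sin\theta)^{2\alpha}$, which is nonnegative on $(0,\pi)$ since $\alpha>-1/2$. Hence the integrand, and therefore $\tau^\alpha_x E^\alpha_t(y)$, is nonnegative. The main obstacle throughout is thus concentrated in the Bessel–Gaussian integral of (iv); the remaining items are short deductions once (iv) is in hand.
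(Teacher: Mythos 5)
Your proof is correct, but there is nothing in the paper to compare it against: the paper states this proposition without any proof, simply quoting it from the reference \cite{Khaled}, so your argument fills a gap rather than paralleling one in the text. The key computation in (iv) works exactly as you describe: by the product structure of the kernel $\Lambda^d_\alpha$, of the Gaussian, and of the measure $d\mu_\alpha$, the transform splits into the classical Fourier transform of a Gaussian over $\mathbb{R}^d$, giving $(4\pi t)^{d/2}e^{-t\left\|\lambda'\right\|^2}$, times the Hankel-type integral $\int_0^\infty e^{-x_{d+1}^2/4t}\,j_\alpha(x_{d+1}\lambda_{d+1})\,x_{d+1}^{2\alpha+1}dx_{d+1}$; termwise integration of the Bessel series with your moment formula yields $\tfrac12\Gamma(\alpha+1)(4t)^{\alpha+1}e^{-t\lambda_{d+1}^2}$, and the constants cancel exactly against the normalization of $E^\alpha_t$. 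The interchange of sum and integral is legitimate even by Tonelli alone, since the termwise absolute values sum to the Gaussian times the modified (entire, $e^{\lambda x}$-growth) Bessel series, which is integrable against $x^{2\alpha+1}e^{-x^2/4t}$. The deductions of (ii) and (iii) by evaluating transforms at $0$ are sound: they use $\Lambda^d_\alpha(0,\cdot)\equiv1$, relation (\ref{MMM}) (applicable since $E^\alpha_t\in\mathcal{S}_*(\mathbb{R}^{d+1})$), and the $L^1$-boundedness of $\tau^\alpha_x$ to guarantee the transform of $\tau^\alpha_xE^\alpha_t$ is defined and continuous at $0$. Item (i) from the explicit $\theta$-integral is also fine, with two small remarks: the nonnegativity needs, in addition to $(\sin\theta)^{2\alpha}\geq0$ (true for any real $\alpha$ on $(0,\pi)$), the positivity of the constant $\Gamma(\alpha+1)/(\sqrt{\pi}\,\Gamma(\alpha+1/2))$, which is where $\alpha>-1/2$ actually enters; and your computation tacitly corrects two typos in the paper's formulas (the series for $j_\alpha$ should carry $2^{2k}$ rather than $2^k$, and the heat kernel exponent should involve $\left\|x\right\|$, not $\left\|y\right\|$) --- with the paper's literal expressions the constants in (iv) would not cancel.
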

\section{Paley-Wiener Theorem for  the Weinstein Transform}

In this section we prove a sharp Paley-Wiener theorem for the Weinstein transform and study its consequences. We suppose that $d\geq1$ and $\lambda>0.$ For a non-negative integer $l$, we put 
$$\mathcal{H}_l^\alpha=\left\{P\in\mathcal{P}_{*,l};\:P\;\textrm{is\;homogeneous\;such\; that }\Delta_\alpha P=0 \right\},$$
which is called the space of generalized spherical harmonics of degree $l.$ We fix a $P_l\in\mathcal{H}_l^\alpha$ and define the Weinstein coefficients of  $\varphi\in\mathcal{S}(\mathbb{R}^{d+1}_+)$  in the angular variable by
\begin{equation}\label{888} 
\varphi_{l,\alpha}(\lambda)=\int_{S^d_+}\varphi(\lambda t)P_l(t)d\sigma_\alpha(t),
\end{equation}
with $d\sigma_\alpha(t)=t^{2\alpha+1}_{d+1}d\sigma_{d+1}(t)$. Then the Weinstein spherical harmonic coefficients of $\varphi\in\mathcal{S}(\mathbb{R}^{d+1}_+)$ are given by 
\begin{equation}\label{889} 
\Phi_{l,\alpha}(\lambda)=\lambda^{-1}\int_{S^d_+}\mathcal{F}_{W,\alpha}(\varphi)(\lambda,t)P_l(t)d\sigma_\alpha(t)
\end{equation}

\begin{theorem}\label{tyt}Let $\varphi\in\mathcal{S}_*(\mathbb{R}^{d+1}_+)$ and $R$ be a positive number. Then $\varphi$  is supported in $\left\{x;\;\left\|x\right\|<R\right\}$ if and only if the  Weinstein spherical harmonic coefficients of $\varphi$ extends to an entire function of $\lambda\in\mathbb{C}$ satisfying the estimate
$$\left|\Phi_{l,\alpha}(\lambda)\right|\leq c_{l,\alpha}e^{R\left\|\Im(\lambda)\right\|}.$$
\end{theorem}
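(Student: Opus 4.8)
\emph{Strategy.} The plan is to reduce the full $(d+1)$-dimensional Weinstein transform, restricted to a fixed generalized spherical harmonic $P_l\in\mathcal{H}_l^\alpha$, to a one-dimensional Hankel transform of a shifted order, and then to play off the classical Paley--Wiener theorem for the Hankel transform (cited as \cite{Kr,V}) against the completeness of the generalized spherical harmonics on $S^d_+$. Writing $x=rt$ with $r=\|x\|$ and $t\in S^d_+$, the measure factors as $d\mu_\alpha(x)=r^{d+2\alpha+1}\,dr\,d\sigma_\alpha(t)$, which is what makes such a separation of the radial and angular variables possible.

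\emph{The reduction.} Insert the definition of $\mathcal{F}_{W,\alpha}$ into \eqref{889} and carry out the angular integration against $P_l$. A Funk--Hecke/Bochner-type identity for the Weinstein kernel $\Lambda_\alpha^d$ --- obtained from its series expansion together with the orthogonality of $\mathcal{H}_l^\alpha$ in $L^2(S^d_+,d\sigma_\alpha)$ --- shows that integrating $\Lambda_\alpha^d(\lambda t,rs)$ against $P_l(s)$ over $s\in S^d_+$ reproduces $P_l(t)$ times a normalized Bessel factor $j_\nu(\lambda r)$ of order $\nu=\alpha+\tfrac{d}{2}+l$. Consequently, up to normalizing constants and an explicit power of $\lambda$,
$$\Phi_{l,\alpha}(\lambda)=c_{l,\alpha}\int_0^\infty \varphi_{l,\alpha}(r)\,j_\nu(\lambda r)\,r^{2\nu+1}\,dr,$$
that is, $\Phi_{l,\alpha}$ is the Hankel transform of order $\nu$ of the radial coefficient $\varphi_{l,\alpha}$ from \eqref{888}. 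Establishing this identity --- pinning down $\nu$ and checking that the prefactor cancels any pole at $\lambda=0$ (using $\Lambda_\alpha^d(0,x)=1$ together with $\int_{S^d_+}P_l\,d\sigma_\alpha=0$ for $l\ge1$) so that the right-hand side is an even smooth function of $\lambda$ --- is the main obstacle and the technical heart of the proof.

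\emph{Necessity.} This direction is direct and does not even require the reduction. If $\varphi$ is supported in $\{\|x\|<R\}$, then $\varphi(\lambda t)=0$ for $\lambda\ge R$ and every $t\in S^d_+$, so $\varphi_{l,\alpha}$ is supported in $[0,R)$. Continuing $\Phi_{l,\alpha}$ to $\lambda\in\mathbb{C}$ through the integral in \eqref{889}, analyticity follows from Morera's theorem and Fubini, since $\lambda\mapsto\Lambda_\alpha^d(\lambda t,x)$ is entire and the integration runs over the compact set $\{\|x\|\le R\}\times S^d_+$. The estimate \eqref{klk} (taking no derivatives), combined with the symmetry $\Lambda_\alpha^d(\lambda,x)=\Lambda_\alpha^d(x,\lambda)$, gives $|\Lambda_\alpha^d(\lambda t,x)|\le e^{\|x\|\,|\Im(\lambda)|}\le e^{R\,|\Im(\lambda)|}$ for $\|x\|<R$ and $\|t\|=1$; integrating in $r$ over $[0,R)$ and over $S^d_+$ against the fixed $P_l$ then yields $|\Phi_{l,\alpha}(\lambda)|\le c_{l,\alpha}e^{R\,|\Im(\lambda)|}$.

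\emph{Sufficiency.} Conversely, assume each $\Phi_{l,\alpha}$ extends to an entire function obeying the stated exponential bound. Via the reduction this says exactly that the Hankel transform of order $\nu=\alpha+\tfrac{d}{2}+l$ of $\varphi_{l,\alpha}$ is entire of exponential type $R$; the classical Paley--Wiener theorem for the Hankel transform then forces $\varphi_{l,\alpha}$ to vanish on $[R,\infty)$, for every $l$ and every choice of $P_l\in\mathcal{H}_l^\alpha$. Since the generalized spherical harmonics are complete in $L^2(S^d_+,d\sigma_\alpha)$, the vanishing of all coefficients $\varphi_{l,\alpha}(r)$ for $r\ge R$ forces $\varphi(rt)=0$ for all $t\in S^d_+$ and $r\ge R$, i.e. $\varphi$ is supported in $\{\|x\|<R\}$. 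The delicate points are the invocation of the Hankel Paley--Wiener statement in the exactly matching order $\nu$, which becomes quotable once the reduction step has fixed $\nu$, and the completeness argument recovering the support of $\varphi$ from those of its angular coefficients.
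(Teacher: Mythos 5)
Your proposal follows essentially the same route as the paper: the paper's proof likewise reduces $\Phi_{l,\alpha}$ to the Hankel transform of order $\alpha+\tfrac{d}{2}+l$ of $\varphi_{l,\alpha}$ against the measure $r^{2\alpha+2l+d+1}dr$ (citing Mejjaoli--Salhi, Proposition 5, for exactly the identity you derive via the Funk--Hecke/Bochner argument) and then invokes the Paley--Wiener theorem for the Hankel transform. The only difference is one of detail: the paper quotes the reduction and the Hankel result without proof, whereas you sketch the reduction and spell out the necessity estimate and the completeness argument recovering the support of $\varphi$ from that of its angular coefficients.
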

\begin{proof} In \cite{HM} (see proof of Proposition 5), the authors proved that 
$$\Phi_{l,\alpha}(\lambda)=C_{l,\alpha}\int_0^\infty\varphi_{l,\alpha}(r)j_{\alpha+\frac{d}{2}+l}(\lambda r)r^{2\alpha+2l+d+1}dr,$$
where $\varphi_{l,\alpha}(r)$ as defined in (\ref{888}). Thus,  $\Phi_{l,\alpha}$ is the Hankel transform of order $\alpha+\frac{d}{2}+l$ of the function $\varphi_{l,\alpha}(l).$ So,  Paley-Wiener  Theorem for the Hankel transform ( see \cite{Kr}) and (\ref{888}) completes the proof.
\end{proof}

\begin{theorem} A $\varphi\in \mathcal{S}_*(\mathbb{R}^{d+1}_+)$ be supported in $\left\{x;\;\left\|x\right\|<R\right\}$ if and only if the function $\mathcal{F}_{W,\alpha}\varphi$ extends to an entire function of $y\in\mathbb{C}^{d+1}$ which satisfies
	\begin{equation}\label{pw}
	\left|\mathcal{F}_{W,\alpha}\varphi(y)\right|\leq c e^{R\left\|\Im(y)\right\|}.
	\end{equation}
\end{theorem}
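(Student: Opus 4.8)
The plan is to prove the two implications separately, handling the ``only if'' direction by a direct estimate on the defining integral and the ``if'' direction by reduction to Theorem \ref{tyt} through the spherical harmonic coefficients.

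For the \emph{necessity} (support $\Rightarrow$ exponential type) I would first observe that for fixed $x\in\mathbb{R}^{d+1}_+$ the kernel $y\mapsto\Lambda_\alpha^d(y,x)=e^{-i\langle x',y'\rangle}j_\alpha(x_{d+1}y_{d+1})$ is a product of entire functions of $y\in\mathbb{C}^{d+1}$, hence entire. Since $\varphi$ is supported in the ball $\{\|x\|<R\}$, the integral $\mathcal{F}_{W,\alpha}\varphi(y)=\int_{\|x\|<R}\varphi(x)\Lambda_\alpha^d(y,x)\,d\mu_\alpha(x)$ runs over a compact set, and the estimate (\ref{klk}) provides local uniform bounds in $y$; a standard holomorphy-under-the-integral argument then shows that $\mathcal{F}_{W,\alpha}\varphi$ extends to an entire function on $\mathbb{C}^{d+1}$. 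The bound (\ref{pw}) is immediate from (\ref{klk}) with $\nu=0$: since $|\Lambda_\alpha^d(y,x)|\le e^{\|x\|\,\|\Im y\|}$ and $\|x\|<R$ on the support, one gets $|\mathcal{F}_{W,\alpha}\varphi(y)|\le e^{R\|\Im y\|}\|\varphi\|_{\alpha,1}=c\,e^{R\|\Im y\|}$.

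For the \emph{sufficiency} I would pass to the spherical harmonic coefficients and invoke Theorem \ref{tyt}. Fix $l$ and $P_l\in\mathcal{H}_l^\alpha$, and restrict the entire extension of $\mathcal{F}_{W,\alpha}\varphi$ to the complex rays $y=\lambda t$, $\lambda\in\mathbb{C}$, $t\in S^d_+$. Because $t$ is real with $\|t\|=1$, we have $\Im(\lambda t)=\Im(\lambda)\,t$ and therefore $\|\Im(\lambda t)\|=|\Im\lambda|$; substituting into (\ref{pw}) gives $|\mathcal{F}_{W,\alpha}\varphi(\lambda t)|\le c\,e^{R|\Im\lambda|}$ uniformly in $t$. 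Integrating against $P_l$ over $S^d_+$ as in (\ref{889}) then yields that $\Phi_{l,\alpha}$ is entire and satisfies $|\Phi_{l,\alpha}(\lambda)|\le c_{l,\alpha}\,e^{R|\Im\lambda|}$ for every $l$, whence Theorem \ref{tyt} forces $\varphi$ to be supported in $\{\|x\|<R\}$.

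The step I expect to be the main obstacle is this passage from the radial estimates to the support statement. Two points need care. First, the factor $\lambda^{-1}$ in (\ref{889}) must be controlled near the origin, which requires that the angular integral of $\mathcal{F}_{W,\alpha}\varphi(\lambda t)$ vanishes to the appropriate order at $\lambda=0$ so that $\Phi_{l,\alpha}$ is genuinely entire; this is exactly what the Hankel-transform representation in the proof of Theorem \ref{tyt} supplies. Second, recovering the support of $\varphi$ itself from the supports of all the coefficients $\varphi_{l,\alpha}$ relies on the completeness of the generalized spherical harmonics $\{P_l\}$ in $L^2(S^d_+,d\sigma_\alpha)$, so that $\varphi_{l,\alpha}(r)=0$ for $r\ge R$ and all $l$ implies $\varphi(rt)=0$ for $r\ge R$ and a.e. $t\in S^d_+$. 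Once these are in place the theorem follows.
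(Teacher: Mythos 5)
Your proposal is correct and follows essentially the same route as the paper: necessity by the direct estimate $\left|\Lambda^d_\alpha(y,x)\right|\leq e^{\left\|x\right\|\left\|\Im y\right\|}$ applied to the integral over the compact support, and sufficiency by forming the spherical harmonic coefficients $\Phi_{l,\alpha}$ from the entire extension and invoking Theorem \ref{tyt}. The extra points you flag (the $\lambda^{-1}$ factor and completeness of the generalized spherical harmonics) are details the paper leaves implicit in its appeal to Theorem \ref{tyt}, not a departure from its argument.
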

\begin{proof} Since the Weinstein kernel $\Lambda^d_\alpha(x,y)$ is an entire function in $y\in\mathbb{C}^{d+1}$  and satisfied 
$$\left|\Lambda^d_\alpha(x,y)\right|\leq  e^{\left\|x\right\|\left\|\Im(y)\right\|},$$
On the other hand,  by the definition of the Weinstein transform we have
$$\left|\mathcal{F}_{W,\alpha}\varphi(y)\right|\leq c e^{R\left\|\Im(y)\right\|},$$
where $c=\left\|\varphi\right\|_{1,\alpha}.$

Conversely, assume that the function $\mathcal{F}_{W,\alpha}\varphi$ is an entire function of $y\in\mathbb{C}^{d+1}$ and satisfied (\ref{pw}), thus implies that the function 
$$\Phi_{l,\alpha}(\lambda)=
\lambda^{-1}\int_{S^d_+}\mathcal{F}_{W,\alpha}(\varphi)(\lambda,t)P_l(t)d\sigma_\alpha(t), \;\lambda\in\mathbb{C}$$
is an entire function of exponential type $R$, from which the converse follows from the Theorem \ref{tyt}.
\end{proof}
\begin{coro}\label{cc1}Let $\varphi\in \mathcal{S}_*(\mathbb{R}^{d+1}_+)$ be supported in $\left\{x;\;\left\|x\right\|<R\right\}$. Then  $\tau^\alpha_y\varphi$ is supported in $\left\{x;\;\left\|x\right\|<R+\left\|y\right\|\right\}$.
\end{coro}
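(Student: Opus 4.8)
The plan is to prove this by combining the Paley-Wiener characterization (inequality (\ref{pw})) with the defining relation (\ref{MMM}) for the generalized translation. First I would observe that since $\varphi$ is supported in $\left\{x;\;\left\|x\right\|<R\right\}$, the previous theorem tells us that $\mathcal{F}_{W,\alpha}\varphi$ extends to an entire function of $z\in\mathbb{C}^{d+1}$ satisfying $\left|\mathcal{F}_{W,\alpha}\varphi(z)\right|\leq c\,e^{R\left\|\Im(z)\right\|}$. The strategy is then to show that $\mathcal{F}_{W,\alpha}(\tau^\alpha_y\varphi)$ satisfies the analogous estimate but with $R$ replaced by $R+\left\|y\right\|$, and to invoke the converse direction of the Paley-Wiener theorem to conclude the support statement.

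The key computation is to estimate $\mathcal{F}_{W,\alpha}(\tau^\alpha_y\varphi)(z)$ on $\mathbb{C}^{d+1}$. By the relation (\ref{MMM}) we have $\mathcal{F}_{W,\alpha}(\tau^\alpha_y\varphi)(z)=\Lambda^d_\alpha(y,z)\,\mathcal{F}_{W,\alpha}(\varphi)(z)$, and this product is entire in $z$ because both factors are: the kernel $\Lambda^d_\alpha(y,z)$ is entire in its second argument, and $\mathcal{F}_{W,\alpha}\varphi$ is entire by the forward direction of the preceding theorem. Now I would apply the exponential bound (\ref{klk}) on the kernel (with $\nu=0$), which gives $\left|\Lambda^d_\alpha(y,z)\right|\leq e^{\left\|y\right\|\left\|\Im(z)\right\|}$, together with the bound on $\mathcal{F}_{W,\alpha}\varphi$. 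Multiplying these yields
\begin{equation*}
\left|\mathcal{F}_{W,\alpha}(\tau^\alpha_y\varphi)(z)\right|\leq c\,e^{\left\|y\right\|\left\|\Im(z)\right\|}e^{R\left\|\Im(z)\right\|}=c\,e^{(R+\left\|y\right\|)\left\|\Im(z)\right\|},
\end{equation*}
which is precisely the Paley-Wiener bound of type $R+\left\|y\right\|$.

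Finally, I would invoke the converse direction of the preceding theorem applied to $\tau^\alpha_y\varphi$: since $\mathcal{F}_{W,\alpha}(\tau^\alpha_y\varphi)$ is entire and satisfies the exponential estimate of type $R+\left\|y\right\|$, the function $\tau^\alpha_y\varphi$ must be supported in $\left\{x;\;\left\|x\right\|<R+\left\|y\right\|\right\}$. The main point requiring care is the verification that $\tau^\alpha_y\varphi$ genuinely lies in $\mathcal{S}_*(\mathbb{R}^{d+1}_+)$, so that the hypotheses of the Paley-Wiener theorem apply; this follows from the fact that the Weinstein transform is a topological isomorphism of $\mathcal{S}_*(\mathbb{R}^{d+1}_+)$ and that (\ref{MMM}) exhibits $\tau^\alpha_y\varphi$ as the inverse transform of a Schwartz function. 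The only genuine obstacle is ensuring the entirety and the uniform estimate pass correctly through the product in (\ref{MMM}), but this is immediate from the two factor bounds above.
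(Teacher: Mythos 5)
Your proposal is correct and follows essentially the same route as the paper: both use the relation (\ref{MMM}) to factor $\mathcal{F}_{W,\alpha}(\tau^\alpha_y\varphi)(z)=\Lambda^d_\alpha(y,z)\mathcal{F}_{W,\alpha}\varphi(z)$, bound the kernel by $e^{\left\|y\right\|\left\|\Im(z)\right\|}$ via (\ref{klk}) and the transform by $c\,e^{R\left\|\Im(z)\right\|}$ via the forward Paley--Wiener direction, and then apply the converse direction of the preceding theorem to the product estimate of type $R+\left\|y\right\|$. Your extra remark verifying $\tau^\alpha_y\varphi\in\mathcal{S}_*(\mathbb{R}^{d+1}_+)$ is a small detail the paper leaves implicit, but it does not change the argument.
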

\begin{proof}Let $\psi(x)=\tau_y^\alpha\varphi(x).$ Then $\mathcal{F}_{W,\alpha}\psi(z)=\Lambda^d_\alpha(y,z)\mathcal{F}_{W,\alpha}\varphi(z).$ Thus
$$\left|\mathcal{F}_{W,\alpha}\psi(z)\right|=\left|\Lambda^d_\alpha(y,z)\mathcal{F}_{W,\alpha}\varphi(z)\right|\leq c\;e^{(R+\left\|y\right\|)\left\|\Im z\right\|}.$$
i.e $\mathcal{F}_{W,\alpha}\psi(z)$ extends to $\mathbb{C}^{d+1}$ as an entire function of type $R+\left\|y\right\|$. Consequently, from the previous Theorem we conclude that  $\tau^\alpha_y\varphi$ is supported in $\left\{x;\;\left\|x\right\|<R+\left\|y\right\|\right\}$.
\end{proof}
\begin{coro}\label{cccc}Let $\varphi\in \mathcal{S}_*(\mathbb{R}^{d+1}_+)$. If $\varphi$ is supported in  $\left\{x;\;\left\|x\right\|<R\right\}$, then the following inequality
\begin{equation}
\left\|\tau^\alpha_x\varphi-\varphi\right\|_{\alpha, p}\leq \tilde{C}_{\alpha,d}\left\|x\right\|\left(R+\left\|x\right\|\right)^{\frac{d+2\alpha+2}{p}}
\end{equation}
holds for all $1\leq p\leq\infty$.
\end{coro}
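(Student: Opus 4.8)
The plan is to reduce the general $L^p_\alpha$ estimate to the single endpoint case $p=\infty$ by exploiting that the difference has controlled support. Set $f=\tau^\alpha_x\varphi-\varphi$. Since $\varphi$ is supported in $\{z:\|z\|<R\}$ and, by Corollary \ref{cc1}, $\tau^\alpha_x\varphi$ is supported in $\{z:\|z\|<R+\|x\|\}$, the function $f$ is supported in the ball $B_{R+\|x\|}=\{z\in\mathbb{R}^{d+1}_+:\|z\|<R+\|x\|\}$. Once an estimate of the form $\|f\|_{\alpha,\infty}\leq A_\varphi\|x\|$ is available, Hölder's inequality gives $\|f\|_{\alpha,p}\leq\|f\|_{\alpha,\infty}\,\mu_\alpha(B_{R+\|x\|})^{1/p}$, and a scaling computation shows $\mu_\alpha(B_{R+\|x\|})=c_{\alpha,d}(R+\|x\|)^{d+2\alpha+2}$, because the substitution $z\mapsto\rho z$ multiplies $d\mu_\alpha$ by $\rho^{d+2\alpha+2}$. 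Combining these two facts produces exactly the factor $(R+\|x\|)^{(d+2\alpha+2)/p}$ in the statement; for $p=\infty$ the exponent is $0$ and the bound degenerates to the $L^\infty$ estimate itself.

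Hence the real work is the $L^\infty$ bound. First I would record the pointwise kernel estimate
\[
\left|\Lambda^d_\alpha(x,y)-1\right|\leq \sqrt{d+1}\,\|x\|\,\|y\|,\qquad x,y\in\mathbb{R}^{d+1}_+ .
\]
This follows from $\Lambda^d_\alpha(0,y)=1$ (Proposition~1, parts i and iii) combined with the mean value inequality along the segment $t\mapsto tx$, $t\in[0,1]$: writing $\Lambda^d_\alpha(x,y)-1=\int_0^1\sum_j x_j\,\partial_{\lambda_j}\Lambda^d_\alpha(tx,y)\,dt$ and invoking the derivative bound (\ref{klk}) with $|\nu|=1$, each partial derivative is bounded by $\|y\|\,e^{\|y\|\,\|\Im(tx)\|}=\|y\|$ since $tx$ is real, and the Cauchy--Schwarz inequality $\sum_j|x_j|\leq\sqrt{d+1}\,\|x\|$ finishes the estimate.

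Next, since $\varphi\in\mathcal{S}_*(\mathbb{R}^{d+1}_+)\subset\mathcal{A}_\alpha(\mathbb{R}^{d+1}_+)$ and $f\in\mathcal{S}_*(\mathbb{R}^{d+1}_+)$ too, the inversion formula (\ref{inv}) applies. Using $\mathcal{F}_{W,\alpha}f(y)=(\Lambda^d_\alpha(x,y)-1)\mathcal{F}_{W,\alpha}\varphi(y)$ coming from (\ref{MMM}), I would write
\[
f(z)=C_{\alpha,d}\int_{\mathbb{R}^{d+1}_+}\big(\Lambda^d_\alpha(x,y)-1\big)\,\mathcal{F}_{W,\alpha}\varphi(y)\,\Lambda^d_\alpha(-z,y)\,d\mu_\alpha(y),
\]
and estimate the integrand by $|\Lambda^d_\alpha(-z,y)|\leq1$ together with the pointwise bound above, obtaining
\[
\|f\|_{\alpha,\infty}\leq C_{\alpha,d}\sqrt{d+1}\,\Big(\int_{\mathbb{R}^{d+1}_+}\|y\|\,\left|\mathcal{F}_{W,\alpha}\varphi(y)\right|\,d\mu_\alpha(y)\Big)\|x\|=:A_\varphi\|x\|,
\]
where $A_\varphi<\infty$ because $\mathcal{F}_{W,\alpha}\varphi\in\mathcal{S}_*(\mathbb{R}^{d+1}_+)$ is rapidly decreasing. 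Feeding this into the Hölder step of the first paragraph yields $\|f\|_{\alpha,p}\leq A_\varphi\,c_{\alpha,d}^{1/p}\,\|x\|(R+\|x\|)^{(d+2\alpha+2)/p}$, and since $c_{\alpha,d}^{1/p}\leq\max(1,c_{\alpha,d})$ uniformly in $p\in[1,\infty]$, the constant $\tilde C_{\alpha,d}$ of the statement absorbs $A_\varphi$ and this bounded factor. The only genuinely delicate point is the pointwise estimate on $\Lambda^d_\alpha(x,y)-1$: one must apply (\ref{klk}) in the first variable and observe that the interpolating points $tx$ are real, so the exponential factor is $1$; the remaining steps are Hölder's inequality and the homogeneity of $\mu_\alpha$, together with the routine verification that $\mathcal{S}_*$ is stable under $\tau^\alpha_x$ and under taking differences so that the inversion formula is legitimate.
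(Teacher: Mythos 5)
Your proof is correct and follows essentially the same route as the paper's: the integral representation of $\tau^\alpha_x\varphi-\varphi$ obtained from the inversion formula (\ref{inv}) and (\ref{MMM}), the mean value theorem combined with the derivative bound (\ref{klk}) to produce the pointwise factor $\|x\|\,\|\xi\|$, and the support information from Corollary \ref{cc1} together with the homogeneity of $\mu_\alpha$ to yield $(R+\|x\|)^{(d+2\alpha+2)/p}$. You merely make explicit two points the paper leaves implicit, namely the H\"older step $\|f\|_{\alpha,p}\leq\|f\|_{\alpha,\infty}\,\mu_\alpha(B_{R+\|x\|})^{1/p}$ and the fact that the constant $\tilde{C}_{\alpha,d}$ must absorb the $\varphi$-dependent quantity $\int_{\mathbb{R}^{d+1}_+}\|\xi\|\,|\mathcal{F}_{W,\alpha}\varphi(\xi)|\,d\mu_\alpha(\xi)$.
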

\begin{proof}Inversion formula (\ref{inv}) and (\ref{tau}) yields that 
\begin{equation}\label{486}
\tau^\alpha_x\varphi(y)-\varphi(y)=C_{\alpha,d}\int_{\mathbb{R}^{d+1}_+} \big(\Lambda_\alpha^d(x,\xi)-1\big)\Lambda_\alpha^d(-y,\xi)\mathcal{F}_{W,\alpha}\varphi(\xi)d\mu_\alpha(\xi).
\end{equation}
Combining (\ref{klk}) and (\ref{486}) and using the  mean value theorem we get 
$$\big|\tau^\alpha_x\varphi(y)-\varphi(y)\big|\leq C_{\alpha,d}\left\|x\right\|\int_{\mathbb{R}^{d+1}_+}\left\|\xi\right\|\left|\mathcal{F}_{W,\alpha}\varphi(\xi)\right|d\mu_\alpha(\xi).$$ 
As $\varphi$ is supported in $\left\{y;\;\left\|y\right\|<R\right\}$ and $\tau^\alpha_x\varphi$ is supported in $\left\{y;\;\left\|y\right\|<R+\left\|x\right\|\right\},$ we can restrict the integration domain above to $\left\{y;\;\left\|y\right\|<R+\left\|x\right\|\right\}.$ A short calculation gives the desired  result.
\end{proof}

\begin{coro} \label{456789} If $\varphi\in L^1_\alpha(\mathbb{R}^{d+1}_+),$ then 
\begin{equation}
\lim_{t\longrightarrow0}\left\|\varphi*_WE^\alpha_t-\varphi\right\|_{\alpha,1}=0.
\end{equation}
\end{coro}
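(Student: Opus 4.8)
The plan is to treat $\{E^\alpha_t\}_{t>0}$ as an approximate identity and to reduce the claim to the $L^1_\alpha$-continuity of the Weinstein translation. Since $E^\alpha_t\ge 0$, the mass normalization (\ref{hhh}) reads $\int_{\mathbb{R}^{d+1}_+}E^\alpha_t(y)\,d\mu_\alpha(y)=1$. Using the symmetry $\tau^\alpha_x\varphi(-y)=\tau^\alpha_{-y}\varphi(x)$ (property i of the translation operator) together with the radiality of $E^\alpha_t$, I would first write
$$\varphi*_W E^\alpha_t(x)-\varphi(x)=\int_{\mathbb{R}^{d+1}_+}\big(\tau^\alpha_{-y}\varphi(x)-\varphi(x)\big)E^\alpha_t(y)\,d\mu_\alpha(y).$$
Taking the $L^1_\alpha$-norm in $x$, applying Fubini (Minkowski's integral inequality) and using $E^\alpha_t\ge 0$ then reduces everything to
$$\left\|\varphi*_W E^\alpha_t-\varphi\right\|_{\alpha,1}\le \int_{\mathbb{R}^{d+1}_+}\omega(y)\,E^\alpha_t(y)\,d\mu_\alpha(y),\qquad \omega(y):=\left\|\tau^\alpha_{-y}\varphi-\varphi\right\|_{\alpha,1}.$$

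The crucial ingredient --- and the main obstacle --- is the $L^1_\alpha$-continuity of translation: $\left\|\tau^\alpha_z\varphi-\varphi\right\|_{\alpha,1}\to 0$ as $z\to 0$, equivalently $\omega(y)\to 0$ as $y\to 0$. This is not available off the shelf for general $\varphi$, so I would argue in two stages. For a compactly supported $\psi\in\mathcal{S}_*(\mathbb{R}^{d+1}_+)$, Corollary \ref{cccc} with $p=1$ gives $\left\|\tau^\alpha_z\psi-\psi\right\|_{\alpha,1}\le \tilde{C}_{\alpha,d}\left\|z\right\|(R+\left\|z\right\|)^{d+2\alpha+2}$, which tends to $0$ as $z\to 0$. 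For arbitrary $\varphi\in L^1_\alpha(\mathbb{R}^{d+1}_+)$ I would pick such a $\psi$ with $\left\|\varphi-\psi\right\|_{\alpha,1}<\varepsilon$ (density of compactly supported Schwartz functions in $L^1_\alpha$) and use the contraction bound $\left\|\tau^\alpha_z(\varphi-\psi)\right\|_{\alpha,1}\le \left\|\varphi-\psi\right\|_{\alpha,1}$ (property ii) to split
$$\left\|\tau^\alpha_z\varphi-\varphi\right\|_{\alpha,1}\le \left\|\tau^\alpha_z(\varphi-\psi)\right\|_{\alpha,1}+\left\|\tau^\alpha_z\psi-\psi\right\|_{\alpha,1}+\left\|\psi-\varphi\right\|_{\alpha,1}.$$
Letting $z\to 0$ gives $\limsup_{z\to 0}\left\|\tau^\alpha_z\varphi-\varphi\right\|_{\alpha,1}\le 2\varepsilon$, and since $\varepsilon$ is arbitrary the continuity follows; moreover $\omega(y)\le 2\left\|\varphi\right\|_{\alpha,1}$ for every $y$.

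To finish, I would exploit the concentration of $E^\alpha_t$ at the origin. Given $\varepsilon>0$, choose $\delta>0$ with $\omega(y)<\varepsilon$ for $\|y\|<\delta$; the reduction above then yields
$$\int_{\mathbb{R}^{d+1}_+}\omega(y)E^\alpha_t(y)\,d\mu_\alpha(y)\le \varepsilon+2\left\|\varphi\right\|_{\alpha,1}\int_{\|y\|\ge\delta}E^\alpha_t(y)\,d\mu_\alpha(y).$$
The self-similar change of variables $y=\sqrt{t}\,u$ gives the scaling identity $E^\alpha_t(y)\,d\mu_\alpha(y)=E^\alpha_1(u)\,d\mu_\alpha(u)$, so the tail integral equals $\int_{\|u\|\ge\delta/\sqrt{t}}E^\alpha_1(u)\,d\mu_\alpha(u)$, which tends to $0$ as $t\to 0$ because $\delta/\sqrt{t}\to\infty$ and $E^\alpha_1\in L^1_\alpha(\mathbb{R}^{d+1}_+)$. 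Hence $\limsup_{t\to 0}\int\omega\,E^\alpha_t\,d\mu_\alpha\le\varepsilon$ for every $\varepsilon>0$, which combined with the first paragraph proves the claim. The two points needing care are the density of compactly supported Schwartz functions in $L^1_\alpha$ and the verification of the scaling identity from the explicit form of $E^\alpha_t$.
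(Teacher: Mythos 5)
Your proposal is correct, and it rests on exactly the same pillars as the paper's proof: density of compactly supported functions of $\mathcal{S}_*(\mathbb{R}^{d+1}_+)$ in $L^1_\alpha$, the quantitative bound of Corollary \ref{cccc} with $p=1$, the contraction property of the translation, and the positivity and normalization (\ref{hhh}), (\ref{;;}) of the heat kernel. The organization, however, is genuinely different. The paper applies the density splitting at the level of the convolution, writing $\varphi*_WE^\alpha_t-\varphi=[(\varphi-\psi)*_WE^\alpha_t]+[\psi-\varphi]+[\psi*_WE^\alpha_t-\psi]$, reducing via the contraction bound $\left\|\,\cdot\,*_WE^\alpha_t\right\|_{\alpha,1}\leq\left\|\cdot\right\|_{\alpha,1}$ to the Schwartz term, and then feeding Corollary \ref{cccc} directly into the heat integral; an explicit change of variables shows that term is $O(\sqrt{t})$, giving a quantitative rate for smooth compactly supported data. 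You instead apply Minkowski's inequality directly to general $\varphi$, isolate the $L^1_\alpha$-modulus of continuity $\omega(y)=\left\|\tau^\alpha_{-y}\varphi-\varphi\right\|_{\alpha,1}$, prove as a standalone lemma that $\omega(y)\to0$ as $y\to0$ (using the same density-plus-contraction trick, now applied to the translation rather than the convolution), and finish with the generic approximate-identity endgame: a near/far split of the integral together with the scaling identity $E^\alpha_t(\sqrt{t}\,u)\,d\mu_\alpha(\sqrt{t}\,u)=E^\alpha_1(u)\,d\mu_\alpha(u)$, which you verified correctly. Your route produces a reusable intermediate result (strong $L^1_\alpha$-continuity of the Weinstein translation) and is the more modular, "soft" argument; the paper's route avoids stating that lemma and instead gets an explicit $O(\sqrt{t})$ bound, at the cost of a computation. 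Both proofs share the same two delicate points, which you rightly flag: the density of compactly supported $\mathcal{S}_*$ functions in $L^1_\alpha(\mathbb{R}^{d+1}_+)$ (the paper uses this tacitly when it assumes its approximant $\psi$ has support in a ball) and the a.e.\ validity of the symmetry $\tau^\alpha_x\varphi(-y)=\tau^\alpha_{-y}\varphi(x)$ for $L^1_\alpha$ functions.
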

\begin{proof}From the fact $\tau^\alpha_x E^\alpha_t\geq0$ and  $\tau^\alpha_x\varphi(y)=\tau^\alpha_y\varphi(x)$ we get 
$$\left\|\varphi*_WE^\alpha_t\right\|_{\alpha,1}\leq \int_{\mathbb{R}^{d+1}_+}\int_{\mathbb{R}^{d+1}_+}\left|\varphi(y)\right|\tau^\alpha_xE^\alpha_t(y)d\mu_\alpha(y)d\mu_\alpha(x).$$
By Fubini's Theorem and the equality (\ref{;;}) we obtain 
\begin{equation}\label{ppp}
\left\|\varphi*_WE^\alpha_t\right\|_{\alpha,1}\leq\left\|\varphi\right\|_{\alpha,1}.
\end{equation}
For a given $\epsilon>0,$ we choose $\psi\in\mathcal{S}_*(\mathbb{R}^{d+1}_+)$ such that $\left\|\varphi-\psi\right\|_{\alpha,1}<\epsilon/3.$ Now, we can write $\varphi*_WE^\alpha_t-\varphi$ in the following form
$$\varphi*_WE^\alpha_t-\varphi=[\left(f-g\right)*_WE^\alpha_t]+[\psi-\varphi]+[\psi*_WE^\alpha_t-\psi].$$
So, the triangle inequality and (\ref{ppp}) leads to 
$$\left\|\varphi*_WE^\alpha_t-\varphi\right\|_{\alpha,1}\leq \frac{2\epsilon}{3}+\left\|\psi*_WE^\alpha_t-\psi\right\|_{\alpha,1}.$$
On the other hand, by using (\ref{hhh}) we have
\begin{equation*}
\begin{split}
\psi*_WE^\alpha_t(x)-\psi(x)&=\int_{\mathbb{R}^{d+1}_+}(\tau^\alpha_y\psi(-x)-\psi(x))E^\alpha_t(y)d\mu_\alpha(y)\\
&=\int_{\mathbb{R}^{d+1}_+}(\tau^\alpha_{-y}\psi(x)-\psi(x))E^\alpha_t(y)d\mu_\alpha(y)\\
&=\int_{\mathbb{R}^{d+1}_+}(\tau^\alpha_{y}\psi(x)-\psi(x))E^\alpha_t(y)d\mu_\alpha(y).
\end{split}
\end{equation*}
Thus implies then
$$\left\|\psi*_WE^\alpha_t\right\|_{\alpha,1}\leq\int_{\mathbb{R}^{d+1}_+}\left\|\tau^\alpha_y\psi-\psi\right\|_{\alpha,1}E^\alpha_t(y)d\mu_\alpha(y).$$
If $\psi$ is supported in  $\left\{x;\;\left\|x\right\|<R\right\}$, by Corollary \ref{cccc} we get 
\begin{equation*}
\begin{split}
\left\|\psi*_WE^\alpha_t\right\|_{\alpha,1}&\leq \tilde{C}_{\alpha,d}\int_{\mathbb{R}^{d+1}_+} \left\|y\right\|\left(R+\left\|y\right\|\right)^{d+2\alpha+2}E^\alpha_t(y)d\mu_\alpha(y)\\
&=\frac{2 \tilde{C}_{\alpha,d}}{\pi^{\frac{d}{2}}\Gamma(\alpha+1)(4t)^{\alpha+1+d/2}}\int_{\mathbb{R}^{d+1}_+} \left\|y\right\|\left(R+\left\|y\right\|\right)^{d+2\alpha+2}e^{-\left\|y\right\|^2/4t}d\mu_\alpha(y)\\
&=\frac{4 \tilde{C}_{\alpha,d}\sqrt{t}}{\pi^{\frac{d}{2}}\Gamma(\alpha+1)}\int_{\mathbb{R}^{d+1}_+} \left\|y\right\|\left(R+2\sqrt{t}\left\|y\right\|\right)^{d+2\alpha+2}e^{-\left\|y\right\|^2}d\mu_\alpha(y),
\end{split}
\end{equation*}
which can be made smaller than $\epsilon/3$ by choosing $\epsilon$ small. So, the proof of Corollary \ref{456789} is completes.
\end{proof}

 As a consequence of the Corollary \ref{456789} we reobtain the inversion formula (\ref{inv}).

\begin{coro} For $\varphi\in\mathcal{A}_\alpha(\mathbb{R}^{d+1}_+)$, then for almost $x\in\mathbb{R}^{d+1}_+$
\begin{equation}
\varphi(x)=C_{\alpha,d}\int_{\mathbb{R}^{d+1}_+}\mathcal{F}_{W,\alpha}\varphi(y)\Lambda_{\alpha}^d(x,-y)d\mu_\alpha(y)
\end{equation}
\end{coro}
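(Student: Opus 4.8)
The plan is to recover $\varphi$ as the limit, as $t\to0$, of its heat regularizations $\varphi*_W E^\alpha_t$, and to identify that limit on the transform side with the integral appearing in the statement. First I would compute $\varphi*_W E^\alpha_t$ explicitly. Substituting the representation (\ref{tau}) for $\tau^\alpha_x\varphi(-y)$ into the definition of the convolution produces a double integral in $(y,z)$; since $\mathcal{F}_{W,\alpha}\varphi\in L^1_\alpha(\mathbb{R}^{d+1}_+)$, $E^\alpha_t\in L^1_\alpha(\mathbb{R}^{d+1}_+)$ and $|\Lambda^d_\alpha|\le1$ on real arguments, Fubini's theorem applies and I may integrate in $y$ first. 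Using the symmetry $\Lambda^d_\alpha(y,z)=\Lambda^d_\alpha(z,y)$ together with (\ref{MMMM}), the inner integral collapses to $\mathcal{F}_{W,\alpha}E^\alpha_t(z)=e^{-t\|z\|^2}$, and after applying the kernel identities $\Lambda^d_\alpha(x,z)=\Lambda^d_\alpha(z,x)$ and $\Lambda^d_\alpha(-x,z)=\Lambda^d_\alpha(x,-z)$ one arrives at
$$\varphi*_W E^\alpha_t(x)=C_{\alpha,d}\int_{\mathbb{R}^{d+1}_+}\mathcal{F}_{W,\alpha}\varphi(z)\,e^{-t\|z\|^2}\,\Lambda^d_\alpha(x,-z)\,d\mu_\alpha(z).$$

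Next I would pass to the limit $t\to0$ in this identity. On the left, Corollary \ref{456789} gives $\|\varphi*_W E^\alpha_t-\varphi\|_{\alpha,1}\to0$, so some sequence $t_n\to0$ yields $\varphi*_W E^\alpha_{t_n}(x)\to\varphi(x)$ for almost every $x$. On the right, the integrand is dominated uniformly in $t$ and $x$ by $|\mathcal{F}_{W,\alpha}\varphi(z)|$, which lies in $L^1_\alpha(\mathbb{R}^{d+1}_+)$ because $\varphi\in\mathcal{A}_\alpha(\mathbb{R}^{d+1}_+)$; hence the dominated convergence theorem shows that, for every fixed $x$, the right-hand side converges to $C_{\alpha,d}\int_{\mathbb{R}^{d+1}_+}\mathcal{F}_{W,\alpha}\varphi(z)\,\Lambda^d_\alpha(x,-z)\,d\mu_\alpha(z)$ as $t\to0$. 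Equating the two limits along $t_n$ gives the asserted formula for almost every $x$, which is precisely (\ref{inv}) rewritten through the kernel symmetries.

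The only delicate point is matching the two different modes of convergence: Corollary \ref{456789} delivers convergence of the left-hand side only in $L^1_\alpha$, whereas the right-hand side converges pointwise (indeed for every $x$). The bridge is the standard fact that $L^1$-convergence forces almost-everywhere convergence along a subsequence; choosing such a subsequence $t_n$ lets me compare the two a.e.-defined limits and conclude their equality a.e. Everything else—the Fubini interchange and the dominating estimate—is routine once one uses the integrability of $\mathcal{F}_{W,\alpha}\varphi$, which is exactly the defining property of $\mathcal{A}_\alpha(\mathbb{R}^{d+1}_+)$.
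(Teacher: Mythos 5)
Your proposal is correct and its skeleton is the same as the paper's: regularize by the heat semigroup, identify $\varphi*_W E^\alpha_t(x)$ with the damped inversion integral $C_{\alpha,d}\int e^{-t\left\|z\right\|^2}\Lambda^d_\alpha(x,-z)\mathcal{F}_{W,\alpha}\varphi(z)\,d\mu_\alpha(z)$, and let $t\to 0$ using Corollary \ref{456789} on the left and dominated convergence on the right, reconciling the two modes of convergence through an a.e.\ subsequence (a step the paper leaves implicit). Where you genuinely differ is in how the intermediate identity is derived. The paper proves it for $\varphi\in\mathcal{S}_*(\mathbb{R}^{d+1}_+)$ by applying the Parseval formula (\ref{MM}) to the pair $(\varphi,\tau^\alpha_x E^\alpha_t)$ together with (\ref{MMM}) and (\ref{MMMM}), and then ``extends to $L^1_\alpha$'' via the boundedness (\ref{ppp}); you instead substitute (\ref{tau}) into the convolution and apply Fubini, collapsing the inner $y$-integral to $\mathcal{F}_{W,\alpha}E^\alpha_t(z)=e^{-t\left\|z\right\|^2}$. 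Your route is tidier: it operates directly on $\mathcal{A}_\alpha(\mathbb{R}^{d+1}_+)$, exactly the class on which (\ref{tau}) and the limiting integral make sense, whereas the paper's density step is delicate (the right-hand side requires $\mathcal{F}_{W,\alpha}\varphi\in L^1_\alpha$, which mere $L^1_\alpha$-membership does not give). One bookkeeping caveat: carried out literally, your Fubini computation produces the kernel $\Lambda^d_\alpha(x,z)$ rather than $\Lambda^d_\alpha(x,-z)$, and the two identities you invoke --- $\Lambda^d_\alpha(x,z)=\Lambda^d_\alpha(z,x)$ and $\Lambda^d_\alpha(-x,z)=\Lambda^d_\alpha(x,-z)$ --- cannot move a minus sign into a single argument. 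This is not a substantive flaw of your argument: it is inherited from the paper's own inconsistent sign conventions (formula (\ref{MMM}) is incompatible with the symmetry $\tau^\alpha_x\varphi(y)=\tau^\alpha_y\varphi(x)$ of the explicitly defined translation, and the paper's own Parseval step carries the mirror image of the same wrinkle); once the conventions are fixed consistently, both derivations land on the stated formula.
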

\begin{proof}Let $\varphi\in\mathcal{S}(\mathbb{R}^{d+1}_+)$. Using (\ref{MM}), (\ref{MMM}) and (\ref{MMMM}) we have
\begin{equation*}
\begin{split}
\varphi*_WE^\alpha_t(x)&=\int_{\mathbb{R}^{d+1}_+}\tau^\alpha_x E^\alpha_t(-y)\varphi(y)d\mu_\alpha(y)\\
&=C_{\alpha,d}\int_{\mathbb{R}^{d+1}_+}\mathcal{F}_{\alpha,W}\big(\tau^\alpha_x E^d_\alpha(.)\big)(-y)\mathcal{F}_{\alpha,W}(\varphi)(y)d\mu_\alpha(y)\\
&=C_{\alpha,d}\int_{\mathbb{R}^{d+1}_+}e^{-t\left\|y\right\|^2}\Lambda_{\alpha}^d(x,-y)\mathcal{F}_{\alpha,W}(\varphi)(y)d\mu_\alpha(y).
\end{split}
\end{equation*}
This extends to $\varphi\in L^1_\alpha(\mathbb{R}^{d+1}_+),$ since the convolution operator extends to  $L^1_\alpha(\mathbb{R}^{d+1}_+),$ as a bounded operator by the inequality (\ref{ppp}). Letting $t\longrightarrow0^+,$ applying Corollary \ref{456789} to
the left-hand side and the dominant convergence theorem to the right-hand side, we see
that the inversion formula follows almost everywhere.
\end{proof}

\begin{coro}\label{cc}Suppose that $U\subseteq\mathbb{R}^{d+1}_+$ is open. Suppose, further that $x_1,x_2,...,x_N\in\mathbb{R}^{d+1}_+$ are pairwise distinct and $z_1,z_2,...,z_N\in\mathbb{C}.$ If $\sum_{k=1}^N z_k \Lambda^d_\alpha(x_k,\xi)=0$ for all $\xi\in U,$ then $z_k=0$ for all $k\in\left\{1,2,..., N\right\}.$
\end{coro}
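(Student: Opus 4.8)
The plan is to combine the real-analyticity of the Weinstein kernel with its explicit product structure. By the symmetry relation $\Lambda^d_\alpha(x_k,\xi)=\Lambda^d_\alpha(\xi,x_k)$ and the formula for the eigenfunction, we have $\Lambda^d_\alpha(x_k,\xi)=e^{-i\langle x_k',\xi'\rangle}\,j_\alpha\big((x_k)_{d+1}\,\xi_{d+1}\big)$. Since $\xi\mapsto\Lambda^d_\alpha(x_k,\xi)$ is the restriction to $\mathbb{R}^{d+1}_+$ of an entire function on $\mathbb{C}^{d+1}$, the combination $F(\xi)=\sum_{k=1}^N z_k\Lambda^d_\alpha(x_k,\xi)$ is real-analytic on the connected open set $\mathbb{R}^{d+1}_+$; as it vanishes on the nonempty open subset $U$, the identity principle for real-analytic functions forces $F\equiv 0$ throughout $\mathbb{R}^{d+1}_+$.

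Next I would disentangle the two factors. Fixing $\xi_{d+1}>0$ and regarding $F(\cdot,\xi_{d+1})\equiv 0$ as an identity in $\xi'\in\mathbb{R}^{d}$, I group the indices according to the distinct values $w_1,\dots,w_M$ assumed by the first components $x_k'$, setting $I_m=\{k:\,x_k'=w_m\}$, so that
$$\sum_{m=1}^{M}\Big(\sum_{k\in I_m} z_k\, j_\alpha\big((x_k)_{d+1}\,\xi_{d+1}\big)\Big)e^{-i\langle w_m,\xi'\rangle}=0,\qquad \xi'\in\mathbb{R}^{d}.$$
The characters $\xi'\mapsto e^{-i\langle w_m,\xi'\rangle}$ attached to distinct frequencies $w_m$ are linearly independent, so each coefficient vanishes: $\sum_{k\in I_m}z_k\, j_\alpha\big((x_k)_{d+1}\xi_{d+1}\big)=0$ for every $\xi_{d+1}>0$ and every $m$. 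Because the points $x_k$ are pairwise distinct, inside a fixed group $I_m$ the last coordinates $a_k:=(x_k)_{d+1}$ are pairwise distinct positive numbers, and the problem is reduced to the one-dimensional statement that $t\mapsto j_\alpha(a_k t)$, $k\in I_m$, are linearly independent.

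Finally I would settle this Bessel independence by analytic continuation in the last variable: since $j_\alpha$ is entire, the relation $\sum_{k\in I_m}z_k\, j_\alpha(a_k t)=0$ persists for complex $t$, and I would evaluate it along $t=iy$ with $y\to+\infty$. Using the growth $j_\alpha(a\,iy)=\Gamma(\alpha+1)(ay/2)^{-\alpha}I_\alpha(ay)\sim C_\alpha\, a^{-\alpha-1/2}\,y^{-\alpha-1/2}\,e^{ay}$ (with $C_\alpha>0$ and $I_\alpha$ the modified Bessel function), and ordering the $a_k$ decreasingly, the term carrying the largest $a_k$ dominates all the others exponentially; dividing by $y^{-\alpha-1/2}e^{a_{\max}y}$ and letting $y\to\infty$ annihilates its coefficient, after which an induction removes every $z_k$, giving $z_k=0$ for all $k$. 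I expect this last step to be the main obstacle: the character separation and the analytic continuation are routine, whereas isolating the individual $z_k$ hinges on the precise exponential growth of $j_\alpha$ along the imaginary axis and on the dominant-balance argument that distinguishes the distinct frequencies $a_k$.
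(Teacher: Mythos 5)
Your proof is correct, but it takes a genuinely different route from the paper's. The paper stays inside the harmonic-analysis machinery it has just built: it multiplies the vanishing combination by $\mathcal{F}_{W,\alpha}f(\xi)$ for a test function $f\in\mathcal{S}_*(\mathbb{R}^{d+1}_+)$, recognizes the product as $\mathcal{F}_{W,\alpha}\bigl(\sum_{k}z_k\tau^\alpha_{x_k}f\bigr)(\xi)$, invokes the fact that $\mathcal{F}_{W,\alpha}$ is an isomorphism of $\mathcal{S}_*(\mathbb{R}^{d+1}_+)$ to conclude $\sum_k z_k\tau^\alpha_{x_k}f\equiv 0$, and then takes $f$ supported in a ball of small radius $\epsilon$ so that, via the support estimate of Corollary \ref{cc1} (itself an application of the Paley--Wiener theorem), evaluation at the points $x_k$ isolates each coefficient $z_k$. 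You instead exploit the explicit product structure $\Lambda^d_\alpha(x_k,\xi)=e^{-i\langle x_k',\xi'\rangle}j_\alpha\bigl((x_k)_{d+1}\xi_{d+1}\bigr)$: after the same identity-principle step, you separate the Euclidean factor using linear independence of characters with distinct frequencies, and then eliminate the Bessel factor by a dominant-balance argument along the imaginary axis, using $j_\alpha(iay)\sim C_\alpha (ay)^{-\alpha-1/2}e^{ay}$ and induction on the ordered values $a_k=(x_k)_{d+1}$. Your route is more elementary and self-contained (it needs no Paley--Wiener input at all), and it is in fact more robust: the paper's choice $\epsilon<\min_{j\neq k}\bigl|\left\|x_k\right\|-\left\|x_j\right\|\bigr|$ tacitly assumes the pairwise distinct points have pairwise distinct norms (otherwise no such $\epsilon>0$ exists), and the claim $\tau^\alpha_{x_k}f(x_k)\neq0$ is asserted without justification, whereas your grouping first by $x_k'$ and then by $(x_k)_{d+1}$ handles arbitrary distinct points with no extra hypotheses. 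What the paper's approach buys in exchange is that it exhibits the new Paley--Wiener theorem as a working tool (the declared purpose of that section) and would transfer to settings where no explicit factorization of the kernel is available.
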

\begin{proof}By successive analytic continuation in each coordinate we can derive that the assumption actually means that $\sum_{k=1}^N z_k \Lambda^d_\alpha(x_k,\xi)=0$ for all $\xi\in\mathbb{R}^{d+1}_+$. Let $f\in \mathcal{S}_*(\mathbb{R}^{d+1}_+)$ be a test function. Then 
$$0=\sum_{k=1}^N z_k \Lambda^d_\alpha(x_k,\xi)\mathcal{F}_{W,\alpha}f(\xi)=\mathcal{F}_{W,\alpha}\left(\sum_{k=1}^N z_k \tau^\alpha_{x_k}f(.)\right)(\xi),$$
for all $\xi\in\mathbb{R}^{d+1}_+$. Since $\tau^\alpha_{x_k}f\in\mathcal{S}_*(\mathbb{R}^{d+1}_+)$ and  The Weinstein transform $\mathcal{F}_{W,\alpha}$ is a topological isomorphism from $\mathcal{S}(\mathbb{R}^{d+1}_+)$ onto itself, we have 
\begin{equation}\label{xx}
\sum_{k=1}^N z_k \tau^\alpha_{x_k}f(\xi)=0,
\end{equation}
 for all $\xi\in\mathbb{R}^{d+1}_+$. Now take $f$ to be compactly supported  and support contained in the ball around zero with radius $\epsilon$ then for $\tau^\alpha_{x_k}f$ is compactly supported and support contained in the ball around zero with radius $\epsilon+\left\|x_k\right\|$ for all $k\in\left\{1,...,N\right\},$ by means of Corollary \ref{cc1}. Now, suppose that $\epsilon<\min_{j\neq k}\big|\left\|x_k\right\|-\left\|x_j\right\|\big|$ thus implies that 
$$\tau^\alpha_{x_k}f(x_j)=0,\textrm{for\;all\;} j\neq k,\;\;\textrm{and}\;\;\tau^\alpha_{x_k}f(x_k)\neq0.$$
So, by again using (\ref{xx}) we obtain that $z_k\tau^\alpha_{x_k}f(x_k)=0$ for all $k\in\in\left\{1,...,N\right\}$ and consequently $z_k=0.$
\end{proof}

\end{document}